\newtheorem{thm}{Theorem}[section]
\newtheorem{obs}[thm]{Observation}
\newtheorem{lem}[thm]{Lemma}
\newtheorem{prop}[thm]{Proposition}
\theoremstyle{definition}
\newtheorem*{df}{Definition}
\newcommand{\gp}{\mathfrak{p}}
\newcommand{\gpp}{\mathfrak{p}'}
\newcommand{\gq}{\mathfrak{q}}
\newcommand{\gqq}{\mathfrak{q}'}
\newcommand{\gr}{\mathfrak{r}}
\newcommand{\EE}{\mathbb{E}}
\newcommand{\FF}{\mathbb{F}}
\newcommand{\QQ}{\mathbb{Q}}
\newcommand{\ZZ}{\mathbb{Z}}
\newcommand{\CS}{\mathcal{S}}
\newcommand{\CSS}{\mathcal{S}'}
\newcommand{\CW}{\mathcal{W}}
\newcommand{\SB}{\mathscr{B}}
\newcommand{\SD}{\mathscr{D}}
\newcommand{\SE}{\mathscr{E}}
\newcommand{\vt}{\check{t}}
\newcommand{\vT}{\check{T}}
\newcommand{\card}[1]{|#1|}
\newcommand{\onto}{\twoheadrightarrow}
\newcommand{\iso}{\xrightarrow{\raisebox{-2bp}{\smash{\tiny$\,\sim\,$}}}}
\newcommand{\units}[1]{#1^\times}
\newcommand{\squares}[1]{#1^{\times2}}
\newcommand{\sing}[1]{E_{#1}}
\newcommand{\singX}{\sing{X}}
\newcommand{\Sing}[1]{\EE_{#1}} 
\newcommand{\SingX}{\Sing{X}}
\newcommand{\SingY}{\Sing{Y}}
\newcommand{\SingZ}{\Sing{Z}}
\newcommand{\dl}[1]{\Delta_{#1}}
\newcommand{\Dl}[1]{\mathbf{\Delta}_{#1}} 
\newcommand{\st}{\mathrel{\mid}}
\DeclareMathOperator{\rank}{rk}
\newcommand{\rk}[1][]{\ifthenelse{\equal{#1}{}}{\rank_{2}}{\rank_{#1}}}
\newcommand{\Tt}[1][]{\ifthenelse{\equal{#1}{}}{(T,t)}{(T_{#1},t_{#1})}}
\newcommand{\smalleq}[2]{\bigl(T_{#1}, t_{#1}, (t_{#2}\st #2\in #1)\bigr)}
\newcommand{\preeq}[2]{\bigl(\vT, \vt, (\vt_{#2}\st #2\in #1)\bigr)}
\DeclareMathOperator{\Div}{Div}
\newcommand{\DivX}{\Div X}
\newcommand{\DivY}{\Div Y}	
\DeclareMathOperator{\Pic}{Pic}
\newcommand{\PicX}{\Pic X}
\newcommand{\PicZeroX}{\Pic^0 X}
\newcommand{\PicXS}[1][]{\Pic(X\setminus \CS#1)}
\newcommand{\PicY}{\Pic Y}	
\newcommand{\PicTY}{\Pic (TY)}	
\newcommand{\PicZ}{\Pic Z}	
\newcommand{\class}[1]{[#1]}
\DeclareMathOperator{\dv}{div}
\newcommand{\divX}{\dv_X}
\newcommand{\divY}{\dv_Y}	
\DeclareMathOperator{\lin}{span}
\DeclareMathOperator{\ord}{ord}
\newcommand{\un}[1][]{\ifthenelse{\equal{#1}{}}{K^\times}{#1^\times}}	
\newcommand{\sqgd}[1]{\sfrac{#1^\times}{\squares{#1}}}
\newcommand{\wild}[1]{\CW #1}
\newcommand{\term}[1]{\emph{#1}}
\def\clap#1{\hbox to 0pt{\hss#1\hss}}
\def\mathclap{\mathpalette\mathclapinternal}
\def\mathclapinternal#1#2{\clap{$\mathsurround=0pt#1{#2}$}}
\newwrite\refs
\renewcommand\@setref[3]{%
        \ifx#1\relax
                \write\refs{'#3' \thepage\space undefined}%
                \protect \G@refundefinedtrue
                \nfss@text{\reset@font\bfseries ??}%
                \@latex@warning{Reference `#3' on page \thepage\space
                                undefined}%
        \else
                \write\refs{'#3' \thepage\space
                            \expandafter\@secondoftwo#1}%
                \expandafter#2#1\null
        \fi
}
\author[A. Czoga{\l}a \and P. Koprowski \and B. Rothkegel]{Alfred Czoga\l a \and Przemys{\l}aw Koprowski \and Beata Rothkegel}
\address{Institute of Mathematics\\
University of Silesia\\ Bankowa 14\\ 40-007 Katowice,
Poland} \email{alfred.czogala@us.edu.pl}
\address{Institute of Mathematics\\
University of Silesia\\ Bankowa 14\\ 40-007 Katowice,
Poland} \email{przemyslaw.koprowski@us.edu.pl}
\address{Institute of Mathematics\\
University of Silesia\\ Bankowa 14\\ 40-007 Katowice,
Poland} \email{brothkegel@math.us.edu.pl}
\title{Wild sets  in global function fields}
\begin{document}

\maketitle
\begin{abstract}
Given a self-equivalence of a global function field, its wild set is the set of points where the self-equivalence fails to preserve parity of valuation. In this paper we describe structure of finite wild sets.
\end{abstract}

\section{Introduction}
In order to fully understand the theory of quadratic forms over a given base field~$K$, one considers automorphisms of its Witt ring~$WK$, i.e. the ring of similarity classes of non-degenerate quadratic forms over~$K$. When $K$ is a global field (either a number field or a function field) this boils down to investigating self-equivalences (see definition below) of~$K$. The correspondence between self-equivalences of~$K$ and automorphisms of~$WK$ (more generally between Hilbert-symbol equivalence\footnote{Hilbert-symbol equivalence was originally called \term{reciprocity equivalence}. The term Hilbert-symbol equivalence was introduced later.} and Witt equivalence) originates from works of J.~Carpenter, P.E.~Conner, R.~Litherland, R.~Perlis, K.~Szymiczek and the first author in early 1990s (see e.g. \cite{PSCL94}) and has been developed in numerous papers since then. A \term{wild set} of a given self-equivalence is the set of points, called \term{wild points}, where the self-equivalence fails to preserve parity of valuation (for a rigorous definition see below). The wild points constitute and obstruction for the associated Witt automorphism to map the Witt ring of an underlying maximal order (say the ring of algebraic integers in number theoretical case or the ring of polynomial functions in geometric case) to itself. This relation was described by the first author in \cite{Czogala01}. For these reasons it is important to understand the structure of wild sets of global fields. The problem was investigated first by T.~Palfrey in~\cite{Palfrey98}. Next, M.~Somodi in \cite{Somodi06, Somodi08} completely described the structure of wild sets of~$\QQ$ and~$\QQ[i]$. Subsequently two of the present authors obtained a partial characterization of wild sets of number fields. Namely it is proved in \cite{CR14} that if $\CS = \{\gp_1, \dotsc, \gp_n\}$ is a set of finite primes of a number field~$K$ such that $-1$ is a local square at each of these primes and the classes of~$\gp_i$, $i\leq n$ are $2$-divisible in the ideal class group of~$K$, then~$\CS$ is a wild set. This result was generalized in our earlier paper to global function fields (see Theorem~\ref{thm:rank0}), providing a complete characterization of wild sets of rank~$0$. However, it was already remarked in \cite{CKR18} that these are not all possible wild set of a global function field. The purpose of the present paper is to describe wild of arbitrary rank. In particular we show (see Theorem~\ref{thm:half_points_lin_indep}) that the number of points in a wild set must exceed its rank at least twice. Next, in Theorem~\ref{thm:rank1}, we characterize wild sets of rank~$1$. Finally, in Proposition~\ref{prop:arbitrary_2rank} we obtain sufficient conditions for sets of higher ranks to be wild sets.

In this paper $K$ is always a global function field of characteristic $\neq 2$ and $\FF_q$ is its full field of constants. We may think of~$K$ as a field of rational functions on a smooth complete curve~$X$. The points of~$X$ are identified with classes of discrete valuations on~$K$. Since we shall never explicitly refer to the generic point of~$X$, for brevity we say ``point'' to mean ``closed point''. We denote the set of closed points again by~$X$.

Given a point $\gp\in X$, $\ord_\gp:\units{K}\onto \ZZ$ is the associated normalized discrete valuation, $K_\gp$ denotes the completion of~$K$ at~$\gp$ and $K(\gp)$ is the residue field. The degree $[K(\gp):\FF_q]$ of the residue field of~$\gp$ over the field of constants is called the \term{degree} of~$\gp$ and denoted $\deg\gp$. The divisors of~$X$ are written additively. Given a divisor $\SD = \sum_{\gp\in X} n_\gp\cdot \gp$, its degree is $\deg\SD := \sum_{\gp\in X}n_\gp\cdot \deg\gp$. The class of~$\SD$ in the Picard group of~$X$ is denoted~$\class{\SD}$.

For a nonempty open subset $Y\subseteq X$ we denote
\[
\sing{Y} := \bigl\{ \lambda\in \units{K}\st \forall_{\gp\in Y}\ord_\gp\lambda\equiv 0\pmod{2}\bigr\}
\]
and we set $\SingY := \sfrac{\sing{Y}}{\squares{K}}$. If~$Y$ is a proper subset we further define
\[
\dl{Y} := \sing{Y}\cap \bigcap_{\gp\in X\setminus Y} \squares{K_\gp}
\qquad\text{and}\qquad
\Dl{Y} := \sfrac{\dl{Y}}{\squares{K}}.
\]
The square-class group $\sqgd{K_\gp}$ of a local field~$K_\gp$ consists of fours classes
\[
\sqgd{K_\gp} = \{ 1, u_\gp, \pi_\gp, \pi_\gp u_\gp\},
\]
where $\ord_\gp u_\gp\equiv 0\pmod{2}$ and $\ord_\gp \pi_\gp\equiv 1\pmod{2}$. We call~$u_\gp$ the \term{$\gp$-primary unit}.

We may treat the quotient group $\sfrac{\PicX}{2\PicX}$ as a $\FF_2$-vector space. Given a proper nonempty open subset $Y\subsetneq X$ denote by~$G_Y$ a subgroup of spanned by the classes of points not in~$Y$:
\[
G_Y := \lin_{\FF_2}\Bigr\{ \class{\gp}+ 2\PicX\st \gp\in X\setminus Y\Bigr\}.
\]

A pair of maps $\Tt$, where $T:X\iso X$ is a bijection and $t:\sqgd{K}\iso \sqgd{K}$ is a group automorphism, is called a \term{self-equivalence} of~$K$ if it preserves Hilbert symbols in a sense that
\[
(\lambda,\mu)_\gp = (t\lambda, t\mu)_{T\gp}
\qquad\text{for all }\gp\in X\text{ and }\lambda, \mu\in \sqgd{K}.
\]
Given a self-equivalence $\Tt$, a point~$\gp$ is called \term{tame} if $\ord_\gp\lambda\equiv \ord_{T\gp}t\lambda\pmod{2}$ for all $\lambda\in K$. Otherwise $\gp$ is called \term{wild}. The set of all wild points of $\Tt$ is denoted $\wild{\Tt}$. A subset $\CS\subset X$ is called a \term{wild set} if there is a self-equivalence $\Tt$ such that $\CS = \wild{\Tt}$. For brevity, we shall say that a wild set~$\CS$ if \term{of rank~$n$}, if $\rk G_{X\setminus \CS} = n$.

It is well known that a self-equivalence preserves~$-1$ and factors over local squares-classes. Nevertheless, for easy of reference let us write down these two facts explicitly.

\begin{obs}\label{obs:local_squares}
Let $\gp\in X$ and $\Tt$ be a self-equivalence of~$K$. Then~$t$ induces a group isomorphism $\sqgd{K_\gp}\iso \sqgd{K_{T\gp}}$. In particular, if $\lambda$ is an element of~$K$, then $\lambda\in \squares{K_\gp}$ if and only if $t\lambda\in \squares{K_{T\gp}}$.
\end{obs}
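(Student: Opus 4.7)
The plan is to derive everything from the non-degeneracy of the Hilbert symbol together with the density of $\un$ in $\un[K_\gp]$ modulo squares. First, I would recall the standard fact that an element $\lambda\in \un[K_\gp]$ is a local square at~$\gp$ if and only if $(\lambda,\mu)_\gp=1$ for every $\mu\in \un[K_\gp]$. Coupled with weak approximation, which guarantees that the natural map $\un\to \sqgd{K_\gp}$ is surjective, this lets me replace the local test elements by global ones: for any $\lambda\in \un$, one has $\lambda\in \squares{K_\gp}$ if and only if $(\lambda,\mu)_\gp=1$ for every $\mu\in \un$.

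Having this bridge between local squareness and the global Hilbert-symbol pairing, the definition of self-equivalence does the work. For $\lambda\in \un$ with $\lambda\in \squares{K_\gp}$ and any $\mu\in \un$, compute
\[
(t\lambda, t\mu)_{T\gp} = (\lambda,\mu)_\gp = 1.
\]
Since $t:\sqgd{K}\iso \sqgd{K}$ is a bijection, as $\mu$ ranges over $\un$, $t\mu$ also ranges over all of $\sqgd{K}$, which again by weak approximation covers every class of $\sqgd{K_{T\gp}}$. Hence $t\lambda$ pairs trivially with every element of $\un[K_{T\gp}]$ modulo squares, so $t\lambda\in \squares{K_{T\gp}}$. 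In particular, $t$ sends the kernel of $\un\onto \sqgd{K_\gp}$ into the kernel of $\un\onto \sqgd{K_{T\gp}}$, so it descends to a well-defined group homomorphism $\bar t_\gp:\sqgd{K_\gp}\to \sqgd{K_{T\gp}}$.

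For bijectivity, the same argument applied to the self-equivalence $(T^{-1},t^{-1})$ yields an induced homomorphism $\bar s_{T\gp}:\sqgd{K_{T\gp}}\to \sqgd{K_\gp}$, and by construction $\bar s_{T\gp}\circ \bar t_\gp$ and $\bar t_\gp\circ \bar s_{T\gp}$ are identities on global square-classes; since those classes exhaust the local quotients, both compositions are the identity, giving the desired isomorphism. The ``in particular'' clause is the statement that the kernel of the restriction $\un\to \sqgd{K_\gp}$ is exactly mapped to the kernel of $\un\to \sqgd{K_{T\gp}}$, which has already been established.

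The only real subtlety is to ensure that the surjectivity of $\un\to \sqgd{K_\gp}$ is invoked correctly at both $\gp$ and $T\gp$; once one commits to using weak approximation at the outset, the rest is formal manipulation of the Hilbert-symbol identity $(\lambda,\mu)_\gp=(t\lambda,t\mu)_{T\gp}$.
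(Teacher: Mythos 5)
Your proof is correct. The paper does not actually prove this observation---it is stated as a well-known fact for ease of reference---but your argument (non-degeneracy of the local Hilbert symbol at a non-dyadic place, surjectivity of $\un\to\sqgd{K_\gp}$ via weak approximation, the defining identity $(\lambda,\mu)_\gp=(t\lambda,t\mu)_{T\gp}$, and inversion via the self-equivalence $(T^{-1},t^{-1})$) is precisely the standard justification the authors are implicitly invoking, with all the relevant points (well-definedness of the induced map and bijectivity) handled correctly.
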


\begin{obs}\label{obs:-1to-1}
If $\Tt$ is a self-equivalence, then $t(-1) = -1$.
\end{obs}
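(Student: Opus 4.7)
The plan is to characterize $-1\in\sqgd{K}$ by a Hilbert-symbol property that is manifestly preserved by any self-equivalence, and then verify the property for $t(-1)$. The key input is the identity $(\lambda,-\lambda)_\gp=1$, valid for every $\lambda\in\units{K}$ and every $\gp\in X$; bimultiplicativity of the symbol together with the fact that its values lie in $\{\pm1\}$ gives the local identity $(\lambda,\lambda)_\gp=(-1,\lambda)_\gp$.

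Building on this, I first establish the characterization: an element $c\in\sqgd{K}$ equals $-1$ if and only if $(c,\lambda)_\gp=(\lambda,\lambda)_\gp$ for every $\lambda\in\units{K}$ and every $\gp\in X$. The forward direction is immediate from the local identity above. For the converse, the hypothesis rearranges to $(-c,\lambda)_\gp=1$ for all such $\lambda$ and $\gp$; weak approximation ensures that $\units{K}$ surjects onto each $\sqgd{K_\gp}$, so non-degeneracy of the local Hilbert symbol forces $-c\in\squares{K_\gp}$ at every point, and the Hasse principle for squares (injectivity of $\sqgd{K}\hookrightarrow\prod_{\gp}\sqgd{K_\gp}$, valid in characteristic $\neq 2$) upgrades this to $-c\in\squares{K}$, i.e.\ $c=-1$.

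It remains to verify the characterization for $c:=t(-1)$. Given arbitrary $\mu\in\units{K}$ and $\gq\in X$, bijectivity of $T$ on $X$ and of $t$ on $\sqgd{K}$ lets me write $\gq=T\gp$ and $\mu\equiv t\lambda\pmod{\squares{K}}$ for suitable $\gp\in X$ and $\lambda\in\units{K}$. Two applications of the self-equivalence identity, sandwiching the local identity, then yield $(t(-1),\mu)_\gq=(t(-1),t\lambda)_{T\gp}=(-1,\lambda)_\gp=(\lambda,\lambda)_\gp=(t\lambda,t\lambda)_{T\gp}=(\mu,\mu)_\gq$, and the characterization forces $t(-1)=-1$.

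The main substantive step is the backward direction of the characterization, as it is the only place where formal Hilbert-symbol manipulation gives way to genuine global arithmetic; the rest is bilinear bookkeeping with the symbol.
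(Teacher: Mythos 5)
Your proof is correct. Note, however, that the paper offers no proof of this observation at all: it is stated as a well-known fact ``for ease of reference,'' with the underlying justification implicitly deferred to the literature on Hilbert-symbol equivalence (e.g.\ Perlis--Szymiczek--Conner--Litherland). So there is no in-paper argument to compare against; what you have written is a self-contained derivation of the classical fact. Your route --- deducing $(\lambda,\lambda)_\gp=(-1,\lambda)_\gp$ from $(\lambda,-\lambda)_\gp=1$, characterizing $-1$ as the unique square class $c$ with $(c,\lambda)_\gp=(\lambda,\lambda)_\gp$ for all $\lambda$ and $\gp$, and then transporting this characterization through the self-equivalence --- is essentially the standard argument. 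Each ingredient you invoke is available in this setting: surjectivity of $\un\to\sqgd{K_\gp}$ follows from density of $K$ in $K_\gp$ and openness of $\squares{K_\gp}$, the local Hilbert symbol is a non-degenerate pairing on $\sqgd{K_\gp}$ for the non-dyadic completions occurring here, and the global square theorem (an element that is a local square everywhere is a square) holds for global function fields of characteristic $\neq 2$. You correctly identify the backward implication of the characterization as the only genuinely arithmetic step; the rest is bookkeeping with bimultiplicativity and the defining identity of a self-equivalence.
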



\section{$2$-divisibility in a Picard group}\label{sec:2-divisibility}
The problem of divisibility in the Picard group of a curve has been investigated by numerous authors in recent years (let us mention \cite{Sharif13} to give just one example). In~\cite{CKR18} we proved that a singleton $\{\gp\}$ is a wild set if and only if the class of~$\gp$ is $2$-divisible in $\PicX$. Below we investigate the same question for arbitrary divisors. 

Assigning to a divisor~$\SD$ its degree $\deg\SD$, one defines a group epimorphism $\deg: \DivX\onto \ZZ$. It is well known that it factors over equivalence classes of divisors, hence it induces a group homomorphism $\PicX\to \ZZ$. In particular we have

\begin{obs}\label{obs_even_degree_of_even_div} 
If $\class{\SD}\in 2\PicX$, then $\deg\SD$ is even.
\end{obs}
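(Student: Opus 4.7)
The statement follows almost immediately from the construction recalled in the paragraph preceding it, so the plan is essentially a one-line unpacking of definitions. The key input is that the degree map $\deg \colon \DivX \onto \ZZ$ factors through linear equivalence, and therefore descends to a group homomorphism $\overline{\deg} \colon \PicX \to \ZZ$ sending $\class{\SD}$ to $\deg \SD$.

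My plan is as follows. First, I would assume $\class{\SD} \in 2\PicX$, so that there exists a divisor $\SE \in \DivX$ with $\class{\SD} = 2\class{\SE} = \class{2\SE}$ in $\PicX$. Applying the induced homomorphism $\overline{\deg}$ gives $\deg \SD = \overline{\deg}\class{\SD} = \overline{\deg}(2\class{\SE}) = 2\,\overline{\deg}\class{\SE} = 2\deg\SE$, which is even.

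There is no real obstacle here: the only fact used beyond the definitions is that principal divisors have degree zero, which is the standard statement ensuring that $\deg$ factors through $\PicX$ in the first place. This is a classical result for smooth complete curves over a field and is what the authors already invoke in the sentence immediately preceding the observation, so in the write-up I would simply cite that factorization and conclude. In short, the proof is a direct application of the fact that $\overline{\deg}$ is a group homomorphism, and its image of any element of $2\PicX$ lies in $2\ZZ$.
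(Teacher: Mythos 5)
Your proof is correct and matches the paper exactly: the authors derive the observation ``in particular'' from the fact that $\deg$ factors through linear equivalence to give a homomorphism $\PicX\to\ZZ$, which is precisely the one-line argument you give by applying $\overline{\deg}$ to $\class{\SD}=2\class{\SE}$. Nothing is missing.
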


The next proposition is a straightforward generalization of \cite[Proposition~3.6]{CKR18}.

\begin{prop} 
Fix a point $\gp\in X$ of an odd degree and denote $Y := X\setminus \{\gp\}$. Given $\gp_1, \dotsc, \gp_k\in Y$ the following statements about the divisor $\SD := \gp_1 + \dotsb + \gp_k$ are equivalent:
\begin{enumerate}
\item The class of~$\SD$ in $\PicX$ is $2$-divisible.
\item The class of~$\SD$ in $\PicY$ is $2$-divisible and the degree of~$\SD$ is even.
\end{enumerate}
\end{prop}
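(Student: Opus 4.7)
The plan is to prove the two implications separately, with the forward direction being essentially formal and the reverse direction using the odd-degree assumption on $\gp$ in an essential way.

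For $(1)\Rightarrow(2)$, I would invoke the natural restriction homomorphism $\PicX\onto \PicY$, induced by the inclusion $\DivY\hookrightarrow \DivX$ (a divisor on $Y$ is a divisor on $X$ with no contribution at~$\gp$) modulo principal divisors (which are the same in both cases since the function field $K$ is shared). Since $\SD$ is supported on~$Y$ by assumption, its image in $\PicY$ is just $\class{\SD}$ itself, and $2$-divisibility is preserved by any group homomorphism. The evenness of $\deg\SD$ is immediate from Observation~\ref{obs_even_degree_of_even_div}.

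For $(2)\Rightarrow(1)$, I would unpack the hypothesis that $\class{\SD}$ is $2$-divisible in $\PicY$: it gives a divisor $\SE\in\DivX$ and a function $f\in\units{K}$ such that, when restricted to $Y$, the divisor $\SD - 2\SE - \dv f$ vanishes. Since this restriction kills exactly the coefficient at~$\gp$, there must be an integer $n\in\ZZ$ with
\[
\SD = 2\SE + \dv f + n\cdot \gp
\]
as divisors on~$X$. Taking degrees and using that principal divisors have degree~$0$, we obtain $\deg\SD = 2\deg\SE + n\deg\gp$. The assumption that $\deg\SD$ is even together with $\deg\gp$ being odd forces $n$ to be even, say $n=2m$. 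Substituting back, $\SD = 2(\SE + m\gp) + \dv f$, whence $\class{\SD} = 2\class{\SE + m\gp}$ in $\PicX$.

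The only non-formal step is the parity argument producing $n=2m$, and this is where the odd-degree hypothesis on $\gp$ enters; without it, the obstruction $n\cdot\gp$ cannot in general be absorbed into a doubled class. I expect no serious obstacle here — this is a straightforward divisor-chasing argument in the spirit of the cited \cite[Proposition~3.6]{CKR18}, generalized from a single point $\gp_1$ to an arbitrary sum $\gp_1+\dotsb+\gp_k$ (the generalization is transparent because the argument only uses the total degree of $\SD$, not its individual components).
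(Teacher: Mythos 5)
Your proposal is correct and follows essentially the same route as the paper: the forward implication via Observation~\ref{obs_even_degree_of_even_div} and the restriction homomorphism $\PicX\to\PicY$, and the reverse implication by lifting the relation from $\DivY$ to $\DivX$, comparing degrees, and using the odd degree of $\gp$ to conclude that the coefficient of $\gp$ is even. No gaps.
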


\begin{proof}
Assume that $\class{\SD}\in 2\PicX$. Then $\deg \SD\in 2\ZZ$ by Observation~\ref{obs_even_degree_of_even_div}. Moreover, by functoriality of $\Pic$, the inclusion $Y\subset X$ induces a group homomorphism $\PicX\to \PicY$. Hence, if we have $\class{\SD} = 2\class{\SE}$ in $\PicX$ for some divisor~$\SE$, then this equality is preserved after we pass to $\PicY$.

Conversely, assume that $\class{\SD}\in 2\PicY$. This means that there is an element $\lambda\in K$ and a divisor $\SE\in \DivY$ such that
\[
\divY \lambda = \SD + 2\SE \in \DivY
\]
Passing from an affine curve~$Y$ to a complete curve~$X$, we have
\[
\divX\lambda = \SD + 2\SE + \ord_\gp(\lambda)\cdot \gp.
\]
In particular the degrees of the divisors on both sides of the equality must be congruent modulo~$2$. Thus, we have
\[
0\equiv \ord_\gp\lambda\cdot \deg\gp\pmod{2}.
\]
Now, the degree of~$\gp$ is odd by the assumption, hence $\ord_\gp \lambda$ must be even. Say $\ord_\gp \lambda = 2k$ for some $k\in \ZZ$. It follows that in $\PicX$ we have $\class{\SD} = 2 \class{\SE + k\cdot \gp}$ and so~$\SD$ is $2$-divisible.
\end{proof}

\begin{lem}\label{lem:lin_dep_iff_subspace} 
Let $Y\neq\emptyset$ be a proper open subset of~$X$. The following two conditions are equivalent:
\begin{enumerate}
\item\label{it:linearly_dependent} The set $\bigl\{\class{\gp} + 2\PicX\st \gp\in X\setminus Y\bigr\}\subset \sfrac{\PicX}{2\PicX}$ is linearly independent over $\FF_2$.
\item\label{it:proper_subspace} $\SingX = \SingY$.
\end{enumerate}
\end{lem}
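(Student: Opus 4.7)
The plan is to study elements of $\sing{Y}$ via the classes of their principal divisors in $\PicX/2\PicX$. The inclusion $\sing{X}\subseteq\sing{Y}$ is automatic from the definitions, so the real content is whether equality holds; the bridge between the two conditions will be the observation that any $\lambda\in\sing{Y}$ yields, after reducing $\divX\lambda$ modulo $2\DivX$, a linear relation among $\{\class{\gp}+2\PicX : \gp\in X\setminus Y\}$.

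For the implication (1) $\Rightarrow$ (2), I would take $\lambda\in\sing{Y}$ and split $\divX\lambda = \SD_0+\SD_1$, where $\SD_0 := \sum_{\gp\in Y}\ord_\gp(\lambda)\cdot\gp$ and $\SD_1 := \sum_{\gp\in X\setminus Y}\ord_\gp(\lambda)\cdot\gp$. By the definition of $\sing{Y}$, the divisor $\SD_0$ lies in $2\DivX$, and since $\divX\lambda$ is principal its class in $\PicX$, and hence in $\PicX/2\PicX$, vanishes. Consequently $\SD_1$ reduces to zero in $\PicX/2\PicX$, which is a linear relation among the classes $\class{\gp}+2\PicX$ for $\gp\in X\setminus Y$ with coefficients $\ord_\gp(\lambda)\bmod 2$. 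Linear independence forces each of these coefficients to vanish, giving $\lambda\in\sing{X}$ and hence $\SingY\subseteq\SingX$.

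For the contrapositive of (2) $\Rightarrow$ (1), I would suppose there is a nontrivial relation $\sum_{\gp\in X\setminus Y}a_\gp\class{\gp}=2\class{\SE}$ with $a_\gp\in\{0,1\}$ not all zero and some $\SE\in\DivX$. Lifting this equality from $\PicX$ to divisors produces $\lambda\in\units{K}$ with $\divX\lambda=\sum a_\gp\cdot\gp-2\SE$. On $Y$ we have $a_\gp=0$, so $\ord_\gp\lambda$ is even there and $\lambda\in\sing{Y}$. For any $\gp_0\in X\setminus Y$ with $a_{\gp_0}=1$, the order $\ord_{\gp_0}\lambda$ is odd, and since orders are additive under multiplication by squares, no element $\lambda\cdot k^2$ with $k\in\units{K}$ can belong to $\sing{X}$. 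Hence the square-class of $\lambda$ in $\SingY$ does not lie in $\SingX$.

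The main point requiring care is arguably the last sentence: one must establish not merely $\lambda\notin\sing{X}$, but that its entire square-class avoids $\SingX$. This reduces to the elementary parity observation that $\sing{X}\cdot\squares{K}$ consists exactly of elements whose orders are all even, which an element with one odd order cannot join.
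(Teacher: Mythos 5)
Your proof is correct and follows essentially the same route as the paper: both directions rest on decomposing $\divX\lambda$ into its parts supported on $Y$ and on $X\setminus Y$ and reducing modulo $2\PicX$, with the only cosmetic difference that you prove $(1)\Rightarrow(2)$ directly where the paper proves its contrapositive. Your closing remark about the whole square-class of $\lambda$ avoiding $\sing{X}$ is a valid (and slightly more careful) handling of a point the paper leaves implicit, since $\sing{X}$ is a union of square classes.
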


\begin{proof}
Let $X\setminus Y = \{ \gp_1, \dotsc, \gp_k\}$. Suppose that the classes of $\gp_1, \dotsc, \gp_k$ are linearly dependent in $\sfrac{\PicX}{2\PicX}$, then there are $\alpha_1, \dotsc, \alpha_k\in \FF_2$ not all equal zero, $\lambda\in \un$ and a divisor $\SD$ such that
\[
\alpha_1\cdot \gp_1 + \dotsb + \alpha_k\cdot \gp_k - 2\SD = \divX\lambda.
\]
Thus $\ord_{\gp_i}\lambda$ is odd for at least one index~$i$, hence $\lambda\notin\SingX$. On the other hand, it is clear that~$\lambda$ lies in $\SingY$. It follows that $\SingX\neq \SingY$.

Conversely, assume that $\SingX\neq\SingY$. Since $\SingX\subset\SingY$, this means that there is an element $\lambda\in \SingY$ which is not in $\SingX$. The divisor of~$\lambda$ has a form
\[
\divX\lambda = \ord_{\gp_1}\lambda\cdot \gp_1 + \dotsb + \ord_{\gp_k}\lambda\cdot \gp_k + \sum_{\gq\in Y}\ord_\gq\lambda\cdot \gq.
\]
Now, $\ord_\gq\lambda\equiv 0\pmod{2}$ for every $\gq\in Y$ since $\lambda\in \SingY$. Consequently
\[
\alpha_1\class{\gp_1} + \dotsb + \alpha_k\class{\gp_k}\in 2\PicX,
\]
where $\alpha_i\in \FF_2$ is the reminder modulo~$2$ of $\ord_{\gp_i}\lambda$. Not all of them are equal to zero, because $\lambda\notin \SingX$.
\end{proof}

The next proposition relates the $2$-rank of~$G_Y$ (i.e. the rank of the set $X\setminus Y$) with that of relevant Picard groups.

\begin{prop}\label{prop:2rank_by_GY} 
Let $Y\neq \emptyset$ be a proper open subset of~$X$. Then
\[
\rk\PicY = 1 + \rk\PicZeroX - \rk G_Y.
\]
\end{prop}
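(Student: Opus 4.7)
The strategy is to decompose $\rk\PicY$ via two standard exact sequences. First, I would write down the excision right-exact sequence
\[
\bigoplus_{\gp\in X\setminus Y}\ZZ \to \PicX \onto \PicY \to 0,
\]
in which the left arrow sends the generator at $\gp$ to $\class{\gp}$ and the right arrow is restriction of divisor classes. Exactness here reflects the standard description of $\PicY$ as $\PicX$ modulo the subgroup generated by classes of points of $X\setminus Y$: a principal divisor on~$Y$ is obtained from a principal divisor on~$X$ by discarding the part supported on $X\setminus Y$, so two divisors on~$Y$ are linearly equivalent precisely when they differ, in $\PicX$, by an integral combination of the $\class{\gp}$ with $\gp\in X\setminus Y$.

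Tensoring with $\FF_2$ over $\ZZ$ preserves right-exactness, so
\[
\bigoplus_{\gp\in X\setminus Y}\FF_2 \to \sfrac{\PicX}{2\PicX} \to \sfrac{\PicY}{2\PicY} \to 0
\]
is exact, and the image of the leftmost map is, by the very definition of~$G_Y$, exactly that subspace. Counting $\FF_2$-dimensions therefore yields
\[
\rk\PicY = \rk\PicX - \rk G_Y.
\]

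To finish, I would invoke F.~K.~Schmidt's theorem: for a smooth projective curve over a finite field the degree map $\deg\colon\PicX\onto\ZZ$ is surjective. The associated short exact sequence $0\to \PicZeroX\to \PicX\xrightarrow{\deg}\ZZ\to 0$ splits because $\ZZ$ is free, giving $\PicX\cong\PicZeroX\oplus\ZZ$ and hence $\rk\PicX = 1 + \rk\PicZeroX$. Substituting into the previous display gives the claimed identity. None of the steps is really delicate; the only point deserving care is the identification of the image of the tensored excision map with the subspace~$G_Y$ defined in the paper, and this is a direct unpacking of definitions.
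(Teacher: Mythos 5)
Your proof is correct, but it takes a genuinely different route from the one in the paper. You work directly with the excision sequence $\bigoplus_{\gp\in X\setminus Y}\ZZ\to\PicX\to\PicY\to 0$, tensor with $\FF_2$, identify the image of the left-hand map with $G_Y$, and then feed in $\rk\PicX=1+\rk\PicZeroX$ via F.~K.~Schmidt's surjectivity of the degree map; all of these steps are sound (note that $\PicX/2\PicX$ is finite-dimensional since $\PicZeroX$ is finite, so the dimension count is legitimate). The paper instead stays entirely inside its square-class-group machinery: it splits into the cases $\rk G_Y=0$ and $\rk G_Y>0$, in the latter case choosing an intermediate open set $Z\supseteq Y$ with $X\setminus Z$ a basis of $G_Y$, invoking Lemma~\ref{lem:lin_dep_iff_subspace} to get $\SingX=\SingZ$, and then chaining together several results quoted from \cite{CKR18} relating $\rk\Sing{Z}$, $\rk\PicZ$ and $\rk\PicZeroX$. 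Your argument is shorter, avoids the case distinction, and makes the statement transparently a formal consequence of right-exactness of $-\otimes\FF_2$; the paper's argument has the advantage of reusing exactly the lemmas (on $\Sing{Y}$ and $\Dl{Y}$) that are needed elsewhere in the paper anyway, so nothing new has to be imported. Either proof is acceptable.
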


\begin{proof}
Denote $k := \rk G_Y$. First assume that $k = 0$. This means that the class of every point~$\gp$ not in~$Y$ is $2$-divisible in $\PicX$. Fix any $\gq\in X\setminus Y$ and set $Z := X\setminus \{\gq\}$. Then \cite[Proposition~2.7]{CKR18} asserts 
that
\begin{equation}\label{eq:CKR_2.7}
\PicZeroX + 1 = \rk \PicZ.
\end{equation}
By functoriality of $\Pic$, the natural inclusion of~$Z$ in~$X$ induces a group homomorphism $\PicX\to \PicZ$. This means that every point $\gp\in Z\setminus Y$, being $2$-divisible in $\PicX$, is $2$-divisible in $\PicZ$. Now, \cite[Lemma~2.5]{CKR18} states that
\[
\rk\PicY = \rk\PicZ
\]
and so the assertion follows from Eq.~\eqref{eq:CKR_2.7}.

Now assume that $k > 0$. Fix a subset $Z\subsetneq X$ containing~$Y$ and such that $\card{X\setminus Z} = k$ and the classes of points not in~$Z$ are linearly independent in $\sfrac{\PicX}{2\PicX}$ and generate the whole group~$G_Y$. Lemma~\ref{lem:lin_dep_iff_subspace} implies that $\SingX = \SingZ$. Therefore, using \cite[Lemma~2.4 and Proposition 2.3.(1)]{CKR18} we obtain
\[
\rk\PicZeroX + 1= \rk \SingX = \rk\SingZ = \rk \PicZ + k.
\]
Now the same argument as in the case $k = 0$, shows that $\rk\PicY = \rk\PicZ$. Consequently
\[
\rk\PicY = 1 + \rk\PicZeroX - k
\]
as desired.
\end{proof}

One consequence of the previous proposition is that every self-equivalence preserves $2$-ranks of the groups we are interested in.

\begin{prop}\label{prop:Tt_preserves_2ranks} 
Let $\Tt$ be a self-equivalence of~$K$. Assume that $\Tt$ is tame on a nonempty open subset $Y\subsetneq X$. Then:
\begin{enumerate}
\item The self-equivalence $\Tt$ preserves the subgroups~$\SingY$ and $\Dl{Y}$ of $\sqgd{K}$ in the sense that:
\[
t(\SingY) = \Sing{TY}\qquad\text{and}\qquad t(\Dl{Y}) = \Dl{TY}.
\]
\item The self-equivalence $\Tt$ preserves the $2$-ranks of $\PicY$ and~$G_Y$ in the sense that:
\[
\rk \PicY = \rk \PicTY\qquad\text\qquad \rk G_Y = \rk G_{TY}.
\]
\end{enumerate}
\end{prop}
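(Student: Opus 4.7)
The plan is to establish (1) directly from the definitions, using tameness at points of $Y$ together with Observation~\ref{obs:local_squares} at points of $X\setminus Y$, and then to deduce (2) from (1) via Lemma~\ref{lem:lin_dep_iff_subspace} and Proposition~\ref{prop:2rank_by_GY}. The one point demanding attention throughout is that the inverse pair $(T^{-1},t^{-1})$ is again a self-equivalence and is tame on $TY$; both facts come from the substitution $\mu = t\lambda$ in the defining identities and are the sole obstacle to closing each of the pairs of inclusions below.

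For (1), take $\lambda \in \sing{Y}$ and $\gq\in TY$; writing $\gq = T\gp$ for the unique $\gp\in Y$, tameness at $\gp$ gives $\ord_\gq t\lambda \equiv \ord_\gp \lambda \equiv 0\pmod 2$. This shows $t(\SingY)\subseteq \Sing{TY}$, and the reverse inclusion follows by repeating the argument with $(T^{-1},t^{-1})$ in place of $(T,t)$. The identity $t(\Dl{Y}) = \Dl{TY}$ is then a matter of the extra local-squares conditions at points $\gr\in X\setminus Y$: Observation~\ref{obs:local_squares} applied at $\gr$ says that $\lambda\in\squares{K_\gr}$ if and only if $t\lambda \in \squares{K_{T\gr}}$, and as $\gr$ ranges over $X\setminus Y$, $T\gr$ ranges bijectively over $X\setminus TY$.

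For (2), Proposition~\ref{prop:2rank_by_GY} applied to both $Y$ and $TY$ reduces the two equalities to the single statement $\rk G_Y = \rk G_{TY}$. Set $k := \rk G_Y$ and pick $Z$ with $Y\subseteq Z\subsetneq X$, $\card{X\setminus Z} = k$, and the classes of points in $X\setminus Z$ linearly independent in $\sfrac{\PicX}{2\PicX}$. By Lemma~\ref{lem:lin_dep_iff_subspace}, $\SingX = \SingZ$. Applying part (1) (with $X$ in place of $Y$, so that $TX = X$ and $\SingX$ is preserved, and with $Z$ in place of $Y$) yields $\SingX = \Sing{TZ}$, and a second appeal to Lemma~\ref{lem:lin_dep_iff_subspace} shows that the $k$ classes of points in $X\setminus TZ$ are linearly independent in $\sfrac{\PicX}{2\PicX}$. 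Since $X\setminus TZ \subseteq X\setminus TY$, these classes lie in $G_{TY}$, whence $\rk G_{TY} \geq k$. The opposite inequality comes from running the same argument with $(T^{-1},t^{-1})$ on $TY$.
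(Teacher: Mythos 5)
Part (1) of your argument is correct and is essentially the paper's own proof. The defect is in part (2), where both invocations of part (1) are made for open sets on which $\Tt$ is \emph{not} known to be tame. Applying part (1) ``with $X$ in place of $Y$'' to get $t(\SingX)=\SingX$ would require $\Tt$ to be tame on all of $X$, i.e.\ to have no wild points; that is not assumed, and the conclusion is genuinely false in the cases of interest: the proof of Theorem~\ref{thm:half_points_lin_indep} exhibits, for any self-equivalence whose wild set has positive rank, elements of $\SingX$ (the $\lambda_i\in\Dl{Z}\subseteq\SingX$ there) whose images under $t$ have odd valuation at some wild point and hence do not lie in $\SingX$. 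Likewise, applying part (1) ``with $Z$ in place of $Y$'' to get $t(\SingZ)=\Sing{TZ}$ requires $\Tt$ to be tame on $Z$; but $Z\setminus Y$ consists of those points of $X\setminus Y$ discarded when you passed to a maximal linearly independent subset, and nothing prevents them from being wild, in which case even the inclusion $t(\SingZ)\subseteq\Sing{TZ}$ fails. So the chain $\Sing{TZ}=t(\SingZ)=t(\SingX)=\SingX$ does not go through, and the inequality $\rk G_{TY}\geq k$ is not established. (The first defect alone could be patched by a dimension count --- $\SingX\subseteq\Sing{TZ}$ always holds, and $\rk\Sing{TZ}=\rk t(\SingZ)=\rk\SingZ=\rk\SingX$ would force equality --- but this still rests on $t(\SingZ)=\Sing{TZ}$, hence on tameness on $Z$.)

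The paper sidesteps all of this by routing part (2) through the group $\Dl{Y}$, for which part (1) \emph{is} legitimately available: by \cite[Proposition~2.3.(2)]{CKR18} one has $\rk\PicY=\rk\Dl{Y}$ and $\rk\PicTY=\rk\Dl{TY}$, while $t(\Dl{Y})=\Dl{TY}$ gives $\rk\Dl{Y}=\rk\Dl{TY}$ because $t$ is an isomorphism of $\sqgd{K}$; Proposition~\ref{prop:2rank_by_GY} then converts $\rk\PicY=\rk\PicTY$ into $\rk G_Y=\rk G_{TY}$, exactly as in your opening reduction. If you wish to keep your route through Lemma~\ref{lem:lin_dep_iff_subspace}, you must first arrange that $\wild{\Tt}\subseteq X\setminus Z$, which your choice of $Z$ does not guarantee.
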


\begin{proof}
The self-equivalence $\Tt$ is tame on~$Y$, hence the equality $t(\SingY) = \Sing{TY}$ follows directly from the definition of the group~$\Sing{Y}$. Take any $\lambda\in \Dl{Y} = \SingY \cap \bigcap_{\gp\notin Y} \squares{K_\gp}$. Then $t\lambda\in \Sing{TY}$ by the previous part and~$t$ preserves the local squares by Observation~\ref{obs:local_squares}. It follows that $t\lambda\in \Dl{TY}$.

In order to prove the second assertion, observe that combining \cite[Proposition~2.3.(2)]{CKR18} with the previous point we obtain
\begin{equation}\label{eq:PicY=PicTY}
\rk \PicY = \rk \Dl{Y} = \rk \Dl{TY} = \rk \PicTY.
\end{equation}
Finally, Proposition~\ref{prop:2rank_by_GY} relates the $2$-ranks of $\PicY$ and~$G_Y$. We have
\begin{align*}
\rk G_Y &= 1 + \rk \PicZeroX - \rk \PicY,\\
\rk G_{TY} &= 1 + \rk \PicZeroX - \rk \PicTY.
\end{align*}
The right-hand-sides agree by Eq.~\eqref{eq:PicY=PicTY} and so do the left-hand-sides.
\end{proof}


\section{Pre-equivalence}
In this section we introduce a new method of constructing a self-equivalence with specified properties. First, however, we need to recall a notion of a \term{small equivalence}.

\begin{df}
Let $\emptyset \neq \CS\subset X$ be a finite (hence closed) subset of~$X$ such that $\rk \PicXS = 0$. A triple $\smalleq{\CS}{\gp}$ is called a \term{small $\CS$-equivalence} of~$K$ if
\begin{enumerate}\renewcommand{\theenumi}{SE\arabic{enumi}}
\item $T_{\CS}\colon \CS\to X$ is injective,
\item $t_{\CS}\colon \Sing{X\setminus\CS}\iso \Sing{X\setminus T_\CS\CS}$ is a group isomorphism,
\item for every $\gp\in\CS$ the associated map $t_{\gp}\colon \sqgd{K_{\gp}}\iso \sqgd{K_{T_\CS\gp}}$ is a bijection preserving local squares, i.e. 
\[
t_\gp\bigl(1\cdot \squares{K_\gp}\bigr) = 1\cdot \squares{K_{T_\CS\gp}}
\]
\item\label{it:SE4} the following diagram commutes
\[
\begin{CD}
\Sing{X\setminus\CS} @>i>> \prod\limits_{\gp\in\CS}\sqgd{K_{\gp}}\\
  @VVt_{\CS}V @VV\prod_{\gp\in\CS}t_{\gp}V\\
\Sing{X\setminus T_\CS\CS} @>j>> \prod\limits_{\gp\in\CS}\sqgd{K_{T_\CS\gp}},
\end{CD}
\]
where $i=\prod_{\gp\in\CS} i_\gp$ (resp. $j= \prod_{\gq\in T_\CS\CS}j_\gq$) is a diagonal map constructed from canonical homomorphisms $i_\gp\colon \Sing{X\setminus \CS}\to \sqgd{K_\gp}$ (resp. $j_\gq\colon \Sing{X\setminus T_\CS\CS}\to \sqgd{K_\gq}$).
\end{enumerate}
\end{df}

Every small equivalence $\smalleq{\CS}{\gp}$ extends to a self-equivalence of~$K$ by \cite[Theorem~2 and Lemma~4]{PSCL94}, i.e. there is $\Tt$ such that $\mbox{$T\mid_\CS$} = T_\CS$ and $t\mid_{\Sing{X\setminus\CS}} = t_\CS$. Moreover $\Tt$ is tame outside~$\CS$ and a point $\gp\in\CS$ is a wild point of $\Tt$ if and only if there is $\lambda\in \sqgd{L_\gp}$ such that $\ord_\gp\lambda\not\equiv \ord_{T_\CS\gp}t_\gp\lambda\pmod{2}$. 

We now define a new object, hereafter called a \term{pre-equivalence} that can be used to construct a small-equivalence, hence in turn a self-equivalence of a desired form.

\begin{df}
Let $\emptyset\neq \CS\subset X$ be a finite (hence closed) subset of~$X$. A triple $\preeq{\CS}{\gp}$ is a \term{pre-equivalence} of~$K$, if
\begin{enumerate}\renewcommand{\theenumi}{PE\arabic{enumi}}
\item $\vT\colon \CS\to X$ is injective,
\item $\vt\colon \sfrac{ \sing{X\setminus\CS} }{ \dl{X\setminus\CS} }\iso \sfrac{ \sing{X\setminus \vT\CS} }{ \dl{X\setminus \vT\CS} }$ is a group isomorphism,
\item for every $\gp\in\CS$ the map $\vt_{\gp}\colon \sqgd{K_{\gp}}\iso \sqgd{K_{\vT\gp}}$ is a bijection preserving local squares, i.e. 
\[
\vt_\gp\bigl(1\cdot \squares{K_\gp}\bigr) = 1\cdot \squares{K_{\vT\gp}}
\]
\item\label{it:PE4} the following diagram commutes
\[
\begin{CD}
\sfrac{ \sing{X\setminus\CS} }{ \dl{X\setminus\CS} } @>\check{\imath}>> \prod\limits_{\gp\in\CS}\sqgd{K_{\gp}}\\
@VV\vt{}V @VV\prod_{\gp\in\CS}\vt_{\gp}V\\
\sfrac{ \sing{X\setminus \vT\CS} }{ \dl{X\setminus \vT\CS} } @>\check{\jmath}>> \prod\limits_{\gp\in\CS}\sqgd{K_{\vT\gp}},
\end{CD}
\]
here~$\check{\imath}$ (resp.~$\check{\jmath}$) is defined as
\[
\check{\imath}\bigl(x\cdot \dl{X\setminus\CS}\bigr) := \bigl(x\cdot \squares{K_\gp}\st \gp\in\CS\bigr),\qquad
\check{\jmath}\bigl(x\cdot \dl{X\setminus\vT\CS}\bigr) := \bigl(x\cdot \squares{K_{\gq}}\st \gq\in\vT\CS\bigr).
\]
\end{enumerate}
\end{df}

\begin{thm}\label{thm:extending_pre_eq} 
Let $\emptyset\neq\CS\subset X$ be a finite subset of~$X$ and let $\preeq{\CS}{\gp}$ be a pre-equivalence. If the $2$-ranks of $G_{X\setminus\CS}$ and $G_{X\setminus \vT\CS}$ coincide, then there is a self-equivalence of~$K$ such that:
\begin{itemize}
\item $T\mid_\CS = \vT$;
\item for every $\lambda\in \Sing{X\setminus\CS}$ one has
\[
(t\lambda)\cdot \dl{X\setminus \vT\CS} = \vt(\lambda\cdot \dl{X\setminus\CS});
\]
\item $\Tt$ is tame outside~$\CS$.
\end{itemize}
\end{thm}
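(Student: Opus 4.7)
The plan is to upgrade the pre-equivalence $\preeq{\CS}{\gp}$ to a small equivalence on a slightly enlarged set $\CSS\supseteq\CS$, and then invoke the extension result \cite[Theorem~2 and Lemma~4]{PSCL94}. The auxiliary points added in $\CSS\setminus\CS$ will be arranged to be tame under the resulting self-equivalence, so that tameness outside $\CS$ (and not merely outside $\CSS$) follows automatically.

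Set $m:=\rk\Pic(X\setminus\CS)$; by Proposition~\ref{prop:2rank_by_GY} and the hypothesis $\rk G_{X\setminus\CS}=\rk G_{X\setminus\vT\CS}$, this integer equals $\rk\Pic(X\setminus\vT\CS)$. Choose $\gq_1,\dotsc,\gq_m\in X\setminus\CS$ and $\gq_1',\dotsc,\gq_m'\in X\setminus\vT\CS$ whose classes form $\FF_2$-bases of $\Pic(X\setminus\CS)/2\Pic(X\setminus\CS)$ and of $\Pic(X\setminus\vT\CS)/2\Pic(X\setminus\vT\CS)$ respectively, put $\CSS:=\CS\cup\{\gq_1,\dotsc,\gq_m\}$ and $\vT\CSS:=\vT\CS\cup\{\gq_1',\dotsc,\gq_m'\}$, and extend $\vT$ by $\vT\gq_i:=\gq_i'$. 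Then $\rk\Pic(X\setminus\CSS)=0=\rk\Pic(X\setminus\vT\CSS)$, and \cite[Proposition~2.3]{CKR18} gives $\Dl{X\setminus\CSS}=1=\Dl{X\setminus\vT\CSS}$, while the identity $\rk\Sing{Y}=\rk\Pic Y+|X\setminus Y|$ used in the proof of Proposition~\ref{prop:2rank_by_GY} yields $\rk\Sing{X\setminus\CSS}=\rk\Sing{X\setminus\CS}$; together with the trivial inclusion $\Sing{X\setminus\CS}\subseteq\Sing{X\setminus\CSS}$ this forces $\Sing{X\setminus\CSS}=\Sing{X\setminus\CS}$, and analogously on the $\vT$-side.

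It remains to construct the local maps at the new points together with a lift of $\vt$. To make each $\gq_i$ tame I require $t_{\gq_i}\colon\sqgd{K_{\gq_i}}\iso\sqgd{K_{\gq_i'}}$ to send the unit class to the unit class (and hence a uniformizer class to a uniformizer class), which in particular preserves local squares. The commutativity condition of SE4 at the new points then translates into the requirement that $\prod_i t_{\gq_i}$ send the image of $\Dl{X\setminus\CS}$ under the natural injection $\Dl{X\setminus\CS}\hookrightarrow\prod_i\sqgd{K_{\gq_i}}$ (which is injective because $\Dl{X\setminus\CSS}=1$) onto the analogous image of $\Dl{X\setminus\vT\CS}$. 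Since both images are rank-$m$ subspaces of rank-$2m$ $\FF_2$-spaces and the choice of $\gq_i,\gq_i'$ is constrained only by the basis condition of the previous paragraph, a matching can be arranged. Once one has such compatible $t_{\gq_i}$, the map $t_\CSS$ is forced on $\Dl{X\setminus\CS}$ by the injectivity above and is prescribed modulo $\Dl{X\setminus\vT\CS}$ on $\Sing{X\setminus\CS}/\Dl{X\setminus\CS}$ by $\vt$; splitting
\[
1\to\Dl{X\setminus\CS}\to\Sing{X\setminus\CS}\to\Sing{X\setminus\CS}/\Dl{X\setminus\CS}\to 1
\]
as $\FF_2$-vector spaces glues these into a group isomorphism. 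The triple $\smalleq{\CSS}{\gp}$ then satisfies SE1--SE4, and the extension theorem yields a self-equivalence $\Tt$ with $T\mid_\CSS=T_\CSS$ and $t\mid_{\Sing{X\setminus\CSS}}=t_\CSS$, tame outside $\CSS$; combined with the built-in parity preservation at each $\gq_i$ this gives the required tameness outside $\CS$.

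The main obstacle I anticipate is the compatibility step: choosing auxiliary points $\gq_i,\gq_i'$ and parity-preserving local maps $t_{\gq_i}$ such that $\prod_i t_{\gq_i}$ sends the image of $\Dl{X\setminus\CS}$ precisely onto that of $\Dl{X\setminus\vT\CS}$. Depending on how tightly parity preservation constrains the choice of $t_{\gq_i}$, an inductive construction that adjoins one auxiliary point at a time and a density statement producing points of $X$ with prescribed local data are likely to be needed.
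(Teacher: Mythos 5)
Your overall strategy---adjoin $m=\rk\PicXS$ auxiliary points to kill the $2$-rank of the Picard group, upgrade the pre-equivalence to a small equivalence on the enlarged set, extend via \cite[Theorem~2 and Lemma~4]{PSCL94}, and use parity-preserving local maps at the auxiliary points to get tameness there---is exactly the paper's, and your bookkeeping up to $\Sing{X\setminus\CS\cup\{\gq_1,\dotsc,\gq_m\}}=\Sing{X\setminus\CS}$ and the triviality of $\Dl{X\setminus\CS\cup\{\gq_1,\dotsc,\gq_m\}}$ is correct. But you leave the decisive step open: you assert that ``a matching can be arranged'' between the images of $\Dl{X\setminus\CS}$ and $\Dl{X\setminus\vT\CS}$ in the products of local square-class groups, justify it only by a rank count (two rank-$m$ subspaces of rank-$2m$ spaces need not correspond under maps already pinned down on unit classes and uniformizer classes), and then flag this as the main unresolved obstacle, anticipating an inductive, density-based construction. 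That is a genuine gap in the write-up, but it closes with a one-line observation rather than new machinery: every $\lambda\in\dl{X\setminus\CS}$ has even valuation at each $\gq_j\in X\setminus\CS$, so the injective image of $\Dl{X\setminus\CS}$ lies in the rank-$m$ subgroup $\prod_j\{1,u_{\gq_j}\}\subset\prod_j\sqgd{K_{\gq_j}}$ and hence equals it; the same holds on the primed side, so \emph{any} parity-preserving choice of the $t_{\gq_j}$ carries one image onto the other. (The paper builds this in from the start by choosing the $\gq_j$ dual to a basis $\lambda_1,\dotsc,\lambda_m$ of $\Dl{X\setminus\CS}$ via \cite[65:18]{OMeara00} and \cite[Lemma~4.1]{CKR18}.)

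A second point you gloss over: condition SE4 must hold at the new points on all of $\Sing{X\setminus\CS}$, not only on $\Dl{X\setminus\CS}$, and an arbitrary splitting of your exact sequence will not achieve this. For each basis class $\bar\mu$ of $\sfrac{\sing{X\setminus\CS}}{\dl{X\setminus\CS}}$ you must choose the lift of $\vt(\bar\mu)$ inside its coset of $\dl{X\setminus\vT\CS}$ so that its local square classes at the $\gq_j'$ match the tuple $\bigl(t_{\gq_j}(\mu\cdot\squares{K_{\gq_j}})\bigr)_j$; this is possible (and uniquely so) because the discrepancy again lies in the unit subgroup, which by the observation above is precisely the image of $\Dl{X\setminus\vT\CS}$. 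The paper handles the same issue by instead normalizing the representatives $\mu_i$, $\mu_i'$ to be local squares at all auxiliary points, multiplying by suitable $\lambda_j$'s. With these two repairs your argument coincides with the paper's proof.
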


\begin{proof}
Let $n := \card{\CS}$ and $\gp_1,\dotsc, \gp_n$ be all the points of~$\CS$. For an index $i\in \{1,\dotsc, n\}$ denote $\gpp_i := \vT\gp_i$ and set $\CSS := \vT\CS = \{ \gpp_1, \dotsc, \gpp_n\}$. Using \cite[Proposition~2.3.(2)]{CKR18} together with Proposition~\ref{prop:2rank_by_GY}, from the assumption on the $2$-ranks of $G_{X\setminus \CS}$ and $G_{X\setminus \CSS}$ we obtain:
\begin{multline*}
\rk\Dl{X\setminus\CS}
= \rk \PicXS
= 1 + \rk \PicZeroX - \rk G_{X\setminus \CS}\\
= 1 + \rk \PicZeroX - \rk G_{X\setminus \CSS}
= \rk \PicXS[']
= \rk \Dl{X\setminus \CSS}.
\end{multline*}
Denote $m := \rk\Dl{X\setminus \CS}$ and let $\{\lambda_1,\dotsc, \lambda_m\}\subset \Dl{X\setminus\CS}$ and $\{\lambda_1',\dotsc, \lambda_m'\}\subset\Dl{X\setminus\CSS}$ be bases of $\Dl{X\setminus\CS}$ and $\Dl{X\setminus\CSS}$, viewed as $\FF_2$-vector spaces. Further, use \cite[65:18]{OMeara00} to pick points $\gq_1, \dotsc, \gq_m\in X\setminus\CS$ and $\gqq_1, \dotsc, \gqq_m\in X\setminus\CSS$ such that for every~$i$, the element~$\lambda_i$ is a local square at each~$\gq_j$ for $j\neq i$ but not a square at~$\gq_i$ (respectively $\lambda_i'$ if a local square at every~$\gqq_j$ for $j\neq i$ but not a square at~$\gqq_i$):
\[
\lambda_i\in \bigcap_{j\neq i} \squares{K_{\gq_j}},\qquad \lambda_i\notin \squares{K_{\gq_i}}
\qquad\text{and}\qquad
\lambda_i'\in \bigcap_{j\neq i} \squares{K_{\gqq_j}},\qquad \lambda_i\notin \squares{K_{\gqq_i}}
\]
Then, \cite[Lemma~4.1]{CKR18} asserts that the classes of $\gq_1, \dotsc, \gq_m$ are linearly independent (over~$\FF_2$) in $\sfrac{ \PicXS }{ 2\PicXS }$. Analogously the classes of $\gqq_1, \dotsc, \gqq_m$ are linearly independent in $\sfrac{ \PicXS['] }{ 2\PicXS['] }$. Enlarge the sets~$\CS$ and~$\CSS$ by attaching points $\gq_1, \dotsc, \gq_m$ (respectively $\gqq_1, \dotsc, \gqq_m$) to them. Set
\[
\CS_1 := \CS\cup \{\gq_1, \dotsc, \gq_m\}\qquad\text{and}\qquad \CSS_1 := \CSS\cup \{\gqq_1, \dotsc, \gqq_m\}.
\]
We then have
\begin{equation}\label{eq:proof:zero_Pics}
\rk \PicXS[_1] = 0 = \rk \PicXS[_1']
\end{equation}
by \cite[Lemma~2.4]{CKR18}.

We will now construct a small $\CS_1$-equivalence of~$K$. First, we take $T_{\CS_1}\colon \CS_1\to X$ defined by a formula
\[
\begin{cases}
T_{\CS_1}(\gp_i) := \gpp_i, & \text{for }1\leq i\leq n\\
T_{\CS_1}(\gq_j) := \gqq_j, & \text{for }1\leq j\leq m.
\end{cases}
\]
Next we define an isomorphism $t_{\CS_1}\colon \Sing{X\setminus \CS_1}\to \Sing{X\setminus \CSS_1}$. To this end, take a basis $\{\mu_1\dl{X\setminus \CS}, \dotsc, \mu_k\dl{X\setminus\CS}\}$ of $\sfrac{ \sing{X\setminus\CS} }{ \dl{X\setminus\CS} }$. Without loss of generality we may assume that $\mu_1,\dotsc, \mu_k$ are local squares at each $\gq_1, \dotsc, \gq_m$. (If it is not a case and $\mu_i\notin \squares{K_{\gq_j}}$ for some $i,j$, we may just replace $\mu_i$ by $\mu_i\lambda_j$.)  Pick representatives $\mu_1',\dotsc, \mu_k'\in \sing{X\setminus\CSS}$ of the images $\vt(\mu_1\dl{X\setminus\CS}), \dotsc, \vt(\mu_k\dl{X\setminus\CS})$. As above, we can assume that $\mu_1', \dotsc, \mu_k'$ are local squares at $\gqq_1, \dotsc, \gqq_m$. It is clear that the classes of $\mu_1',\dotsc, \mu_k'$ form a basis of $\sfrac{\sing{X\setminus \CS'}}{\dl{X\setminus \CS'}}$. Therefore
\[
\Sing{X\setminus\CS_1} = \Sing{X\setminus\CS} = \Dl{X\setminus\CS}\oplus \lin_{\FF_2}\bigl\{\mu_1, \dotsc, \mu_k\bigr\}
\]
and analogously
\[
\Sing{X\setminus\CS_1'} = \Sing{X\setminus\CS'} = \Dl{X\setminus\CS'}\oplus \lin_{\FF_2}\bigl\{\mu_1', \dotsc, \mu_k'\bigr\}.
\]
We then set
\begin{equation}\label{eq:proof:t_S_1}
\begin{cases}
t_{\CS_1}\bigl(\lambda_i\cdot \squares{K}\bigr) := \lambda_i'\cdot \squares{K}& \text{for }1\leq i\leq m\\
t_{\CS_1}\bigl(\mu_j\cdot \squares{K}\bigr) := \mu_j'\cdot \squares{K}& \text{for }1\leq j\leq k
\end{cases}
\end{equation}
and extend it by linearity onto the whole~$\Sing{X\setminus\CS_1}$.

Finally, we define a collection $(t_{\gp_1}, \dotsc, t_{\gp_n}, t_{\gq_1}, \dotsc, t_{\gq_m})$ of isomorphisms of local square-class groups as follows. For $i\leq n$, we set $t_{\gp_i} := \vt_{\gp_i}$, while for $j\leq m$ we let $t_{\gq_j}$ to be the unique group isomorphism from $\sqgd{K_{\gq_j}} = \{ 1, u_j, \pi_j, u_j\pi_j \}$ to $\sqgd{K_{\gqq_j}} = \{ 1, u_j', \pi_j', u_j'\pi_j' \}$ that sends~$u_j$ to~$u_j'$ and~$\pi_j$ to~$\pi_j'$. Here, as always, $u_j$ (resp.~$u_j'$) is a $\gq_j$-primary unit (resp. $\gqq_j$-primary unit), while~$\pi_j$ (resp.~$\pi_j'$) is a corresponding uniformizer.

It remains to check condition~\eqref{it:SE4}. It suffices to notice that it holds for square classes $\lambda_1\squares{K} , \dotsc, \lambda_m\squares{K}$ and $\mu_1\squares{K}, \dotsc, \mu_k\squares{K}$ by the means of conditions~\eqref{it:PE4} and \eqref{eq:proof:t_S_1}. Now these classes form a basis of $\Sing{X\setminus\CS_1}$, hence the condition~\eqref{it:SE4} holds universally.

It follows that $\smalleq{\CS_1}{\gp}$ is a small $\CS_1$-equivalence and so it extends to a self-equivalence $\Tt$ of~$K$ by \cite[Theorem~2 and Lemma~4]{PSCL94} together with \cite[Remark~4.6]{CKR18}.
\end{proof}


\section{Main Results}
In this section we describe the structure of finite wild subsets of~$X$. We begin with a proposition that shows how to construct bigger wild sets by gluing together smaller ones.

\begin{prop}\label{prop:union} 
Let $\CS_1 := \wild{\Tt[1]}$ and $\CS_2 := T_1^{-1}\bigl(\wild{\Tt[2]}\bigr)$ be finite sets. If~$\CS_1$ and~$\CS_2$ are disjoint, then their union is the wild sets of a composition $(T_2\circ T_1, t_2\circ t_1)$:
\[
\CS_1\cup \CS_2 = \wild{(T_2\circ T_1, t_2\circ t_1)}.
\]
\end{prop}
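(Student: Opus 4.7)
The plan is to unpack the tameness condition at every point of $X$ and do a three-way case split, using the disjointness of $\CS_1$ and $\CS_2$ to ensure that at any given $\gp$ at most one factor misbehaves. First, though, I would verify that $(T_2 \circ T_1, t_2 \circ t_1)$ is actually a self-equivalence: the composition of bijections of $X$ is a bijection, the composition of group automorphisms of $\sqgd{K}$ is a group automorphism, and the Hilbert-symbol identity $(\lambda,\mu)_\gp = (t_2 t_1 \lambda, t_2 t_1 \mu)_{T_2 T_1 \gp}$ follows by applying the corresponding identity first for $\Tt[1]$ and then for $\Tt[2]$.

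Next I would recall that $\gp$ is tame for $(T,t)$ iff $\ord_\gp \lambda \equiv \ord_{T\gp}(t\lambda) \pmod{2}$ for every $\lambda \in \units{K}$, noting that by Observation~\ref{obs:local_squares} this parity depends only on the square class of~$\lambda$. To compare $\ord_\gp \lambda$ with $\ord_{T_2 T_1 \gp}(t_2 t_1 \lambda)$ I would insert the intermediate term $\ord_{T_1 \gp}(t_1 \lambda)$ and split on cases.

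Case~1: $\gp \notin \CS_1 \cup \CS_2$. Then $\gp$ is tame for $\Tt[1]$ and $T_1\gp$ is tame for $\Tt[2]$, so both intermediate congruences hold for every $\lambda$ and $\gp$ is tame for the composition. Case~2: $\gp \in \CS_1$. By disjointness $\gp \notin \CS_2$, hence $T_1\gp \notin \wild{\Tt[2]}$, so the second congruence always holds; picking $\lambda$ that witnesses the wildness of $\gp$ for $\Tt[1]$ then breaks the first, hence breaks the composed congruence, so $\gp$ is wild for the composition. Case~3: $\gp \in \CS_2$. Then $\gp \notin \CS_1$, so $\gp$ is tame for $\Tt[1]$; pick $\mu \in \units{K}$ witnessing the wildness of $T_1 \gp$ for $\Tt[2]$ and set $\lambda := t_1^{-1}\mu$, which is well-defined because $t_1$ is an automorphism of $\sqgd{K}$. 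The first congruence holds for this $\lambda$ while the second fails, so $\gp$ is again wild for the composition.

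The argument is essentially a bookkeeping exercise, so I do not foresee any serious obstacle. The only step requiring mild care is the construction of a test element $\lambda \in \units{K}$ in Case~3 via the preimage under $t_1$, which relies on $t_1$ being a bijection on all of $\sqgd{K}$ (rather than merely on some subgroup), and on the fact that the tameness condition descends to square classes so that representatives may be chosen freely.
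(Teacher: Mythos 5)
Your proof is correct and follows essentially the same route as the paper's: a three-way case split on the location of $\gp$, comparing $\ord_\gp\lambda$ with $\ord_{T_2T_1\gp}(t_2t_1\lambda)$ through the intermediate term $\ord_{T_1\gp}(t_1\lambda)$. The only cosmetic difference is that the paper uses primary units as explicit wildness witnesses (citing an observation from an earlier paper that a wild point sends a primary unit to an element of odd valuation), whereas you take an arbitrary witness, which works equally well.
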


\begin{proof}
It is clear that $(T_2\circ T_1, t_2\circ t_1)$ is a self-equivalence of~$K$. Take a point $\gp\in \CS_1$ and let $u_\gp\in K$ be a $\gp$-primary unit. Then $T_1\gp\notin \CS_2$ so $\Tt[2]$ is tame at $T_1\gp$. On the other $\ord_\gp u_\gp\equiv 0\pmod{2}$ and \cite[Observation~2.1]{CKR18} asserts that $\ord_{T_1\gp} t_1u_\gp \equiv 1\pmod{2}$. Thus we have
\[
\ord_{(T_2\circ T_1)(\gp)} (t_2\circ t_1)(u_\gp)\equiv \ord_{T_1\gp} t_1u_\gp \equiv 1\pmod{2}.
\]
This shows that $\gp\in \wild{(T_2\circ T_1, t_2\circ t_1)}$.

Next, take a point $\gp\in \CS_2$. Then $\Tt[1]$ is tame at~$\gp$, but $T_1\gp$ is a wild point of $\Tt[2]$. Let $u_{T_1\gp}\in K$ be a $(T_1\gp)$-primary unit and denote $v_\gp := t_1^{-1}(u_{T_1\gp})$. We now have
\[
\ord_\gp v_\gp \equiv \ord_{T_1\gp} u_{T_1\gp}\equiv 0\pmod{2},
\]
but $\Tt[2]$ is wild at $T_1\gp$, hence
\[
\ord_{(T_2\circ T_1)(\gp)}(t_2\circ t_1)(v_\gp) \equiv \ord_{T_2(T_1\gp)} t_2 u_{T_1\gp}\equiv 1\pmod{2}.
\]
Consequently~$\gp$ is a wild point of the composition $(T_2\circ T_1, t_2\circ t_1)$.

The previous two paragraphs show that $\CS_1\cup \CS_2\subseteq \wild{(T_2\circ T_1, t_2\circ t_2)}$. It remains to prove that there are no other wild points. To this end take a point $\gp \in X\setminus (\CS_1\cup \CS_2)$ and any element $\lambda\in K$. The self equivalence $\Tt[1]$ is tame at~$\gp$ and $\Tt[2]$ is tame at $t_1\gp$. It follows that
\[
\ord_{(T_2\circ T_1)(\gp)} (t_2\circ t_1)(\lambda) \equiv \ord_{T_1\gp} t_1\lambda\equiv \ord_\gp\lambda\pmod{2}
\]
and so $(T_2\circ T_1, t_2\circ t_1)$ is tame at~$\gp$.
\end{proof}

The next theorem provides a necessary condition for a finite set to be wild, basically it says that the number of elements of a wild set must be at least twice bigger that its rank. Recall that for an open set $Y\subset X$ we defined the subgroup~$G_Y$ to be the subspace of $\sfrac{\PicX}{2\PicX}$ spanned by classes of points not in~$Y$.

\begin{thm}\label{thm:half_points_lin_indep} 
Let $\Tt$ be a self-equivalence of a global function field~$X$ and $\CS = \wild\Tt$ be its finite wild set. Then
\[
\card{\CS} \geq 2\cdot \rk G_{X\setminus \CS}.
\]
\end{thm}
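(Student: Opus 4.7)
Let $n := |\CS|$, $r := \rk G_{X \setminus \CS}$, and $Y := X \setminus \CS$. The plan is to embed $W := \SingY/\Dl{Y}$ into the local square-class product $\prod_{\gp \in \CS}\sqgd{K_\gp}$, intersect with the ``even valuation'' subgroup $E := \prod_{\gp \in \CS}\{1, u_\gp\}$, and transport through $t$ to the analogous product on the $T\CS$-side. Writing $\pi$, $\pi'$ for the parity-of-valuation projections onto $\FF_2^\CS$ and $\FF_2^{T\CS}$ (with kernels $E$ and the analogous $E'$), and $W'$, $V' := \pi'(W')$ for the $T\CS$-side versions of $W$ and $\pi(W)$, I aim to show $\dim(W \cap E) = r$ and $\dim V' = n - r$ and then construct an injection $W \cap E \hookrightarrow V'$; the resulting inequality $r \leq n - r$ will be exactly what is needed.

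Both dimension counts will be routine. For $V'$: by construction it is the kernel of the canonical surjection $\FF_2^{T\CS} \twoheadrightarrow G_{TY}$ (sending a tuple to its class in $\sfrac{\PicX}{2\PicX}$), so $\dim V' = n - \rk G_{TY}$, and Proposition~\ref{prop:Tt_preserves_2ranks} gives $\rk G_{TY} = r$. For $W \cap E$: since local squares have even valuation, $\Dl{Y} \subseteq \SingX \subseteq \SingY$, and an element of $\SingY$ has image in $E$ precisely when it has even valuation at every point of $\CS$, i.e., when it lies in $\SingX$. Hence $W \cap E \cong \SingX/\Dl{Y}$, and combining $\rk \SingX = 1 + \rk \PicZeroX$ from \cite[Proposition~2.3.(1)]{CKR18} with the identity $\rk \Dl{Y} = 1 + \rk \PicZeroX - r$, obtained just as at the start of the proof of Theorem~\ref{thm:extending_pre_eq}, yields $\dim(W \cap E) = r$.

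The main obstacle is a local observation: for every wild $\gp \in \CS$, the element $t_\gp(u_\gp)$ must have \emph{odd} valuation at $T\gp$. For otherwise it would be a nontrivial unit class, hence equal to $u_{T\gp}$; bijectivity of $t_\gp$ on the four-element group $\sqgd{K_\gp}$ would then force $t_\gp\bigl(\{\pi_\gp, u_\gp\pi_\gp\}\bigr) = \{\pi_{T\gp}, u_{T\gp}\pi_{T\gp}\}$, so $t_\gp$ would preserve parity on every class, contradicting wildness of $\gp$. Given this fact, the restriction to $E$ of the map $\pi' \circ \prod_{\gp \in \CS} t_\gp$ is a group isomorphism $E \iso \FF_2^{T\CS}$: in each factor it sends $1 \mapsto 0$ and $u_\gp \mapsto 1$. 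Because $t(\SingY) = \Sing{TY}$ and $t(\Dl{Y}) = \Dl{TY}$ by Proposition~\ref{prop:Tt_preserves_2ranks}, the map $\prod t_\gp$ carries $W$ into $W'$, so this isomorphism restricts to an injection $W \cap E \hookrightarrow V'$. The two dimension counts now give $r \leq n - r$, and the only real work in the argument is the local fact about $t_\gp(u_\gp)$; the rest is bookkeeping.
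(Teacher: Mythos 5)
Your proof is correct, and while it runs on the same engine as the paper's --- the local fact that at a wild point $\gp$ the induced map $\sqgd{K_\gp}\iso\sqgd{K_{T\gp}}$ must send the primary-unit class to a class of odd valuation (the paper quotes this as \cite[Observation~2.1]{CKR18}; you reprove it from bijectivity) --- the surrounding bookkeeping is genuinely different. The paper fixes a maximal subset $\{\gp_1,\dotsc,\gp_k\}\subseteq\CS$ whose classes are independent in $\sfrac{\PicX}{2\PicX}$, invokes \cite[Lemma~4.1]{CKR18} to produce explicit dual elements $\lambda_i\in\SingX$ that are local non-squares exactly at $\gp_i$, shows that $t\lambda_1,\dotsc,t\lambda_k$ remain independent modulo $\SingX$, and finishes with the rank formula for $\rk\Sing{X\setminus\CS}$. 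You choose no basis at all: your injection amounts to the map $\sfrac{\SingX}{\Dl{X\setminus\CS}}\hookrightarrow\sfrac{\Sing{T(X\setminus\CS)}}{\SingX}$ induced by $t$, with the two dimensions $r$ and $n-r$ read off from the rank formulas; in particular your identification of $V'$ with the kernel of the natural surjection $\FF_2^{T\CS}\onto G_{TY}$ is a tidy structural step that does not appear in the paper. What your route buys is that it sidesteps \cite[Lemma~4.1]{CKR18} and any choice of witnesses, isolating the single local fact that does all the work; what the paper's route buys is a list of concrete elements pinning the ``extra rank'' to specific points of $\CS$. Two small points to make explicit in a final write-up: the case $\CS=\emptyset$ is vacuous and should be set aside before applying Proposition~\ref{prop:Tt_preserves_2ranks} to $Y=X\setminus\CS$, and the compatibility that lets $\prod_{\gp\in\CS}t_\gp$ carry the image of $W$ into the image of $W'$ is exactly Observation~\ref{obs:local_squares} applied pointwise together with $t(\SingY)=\Sing{TY}$.
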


\begin{proof}
Denote $Z := X\setminus \CS$ and $k := \rk G_Z$. Select a maximal linearly independent (in $\sfrac{\PicX}{2\PicX}$) subset $\{\gp_1, \dotsc, \gp_k\}$ of~$\CS$ and let~$Y := X\setminus \{\gp_1, \dotsc, \gp_k\}$, $Z \subset Y$. Proposition~\ref{prop:2rank_by_GY} asserts that
\[
\rk \PicY = 1 + \rk\PicZeroX - k.
\]
It follows from \cite[Proposition~2.3.(1)]{CKR18} that
\begin{equation}\label{eq:2rank_EE_Z}
\rk\SingZ = \card{\CS} + \rk\PicZeroX + 1 - k.
\end{equation}
Now, \cite[Lemma~4.1]{CKR18} asserts that there are elements $\lambda_1, \dotsc, \lambda_k\in \dl{Z}$, whose square classes are linearly independent in~$\Dl{Z}$ and such that for every $i\leq k$ one has
\[
\lambda_i \notin \squares{K_{\gp_i}} 
\qquad\text{and}\qquad
\lambda_i \in \bigcap_{j\neq i} \squares{K_{\gp_j}}.
\]
From the fact that~$\Dl{Z}$ is a subset of~$\SingX$ we infer that the classes of $\lambda_1, \dotsc, \lambda_k$ are linearly independent in~$\SingX$.

The self-equivalence $\Tt$ is tame on~$Z$, hence $t(\SingZ) = \Sing{TZ}$ and for every $\gq\in Z$ we have $\ord_{T\gq}t\lambda_i \equiv \ord_\gq \lambda_i\equiv 0\pmod{2}$. Therefore the elements $t\lambda_1, \dotsc, t\lambda_k$ lie in~$\Sing{TZ}$ and they clearly remain linearly independent. On the other hand, since $\Tt$ is wild on $X\setminus Z$, we must have $\ord_{T\gp_i} t\lambda_i\equiv 1\pmod{2}$ for every $i\leq k$. In particular $t\lambda_1,\dotsc, t\lambda_k\notin \SingX$. This implies that
\[
\rk \sfrac{\Sing{TZ}}{\SingX} \geq k.
\]
Consequently
\[
\rk\Sing{TZ} = \rk\sfrac{\Sing{TZ}}{\SingX} + \rk\SingX \geq k + 1 + \rk\PicZeroX
\]
by \cite[Lemma~2.4]{CKR18}. Now, $\rk\Sing{TZ} = \rk\SingZ$ and the assertion follows by combining the previous inequality with Eq.~\eqref{eq:2rank_EE_Z}.
\end{proof}

The previous theorem provides a necessary condition for a finite set~$\CS$ to be wild in terms of its rank. In the rest of this section we inductively construct wild sets of all possible finite ranks. The case of rank~$0$ was completely resolved in~\cite{CKR18}. For the sake of completeness let us recall:

\begin{thm}[{\cite[Theorem~4.10]{CKR18}}]\label{thm:rank0}
Let~$K$ be a global function field and~$X$ the associated smooth curve. Assume that $Y\subsetneq X$ is a nonempty open subset and denote $\CS := X\setminus Y$. If $\rk G_Y = 0$, then 
$\CS$ is a wild set.
\end{thm}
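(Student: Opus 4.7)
The plan is to construct a pre-equivalence $\preeq{\CS}{\gp}$ with $\vT$ the identity on $\CS$, and then invoke Theorem~\ref{thm:extending_pre_eq} to produce a self-equivalence of $K$ whose wild set is exactly $\CS$. Taking $\vT = \id$ on $\CS$ makes $\vT\CS = \CS$, so the $2$-rank hypothesis of Theorem~\ref{thm:extending_pre_eq} becomes automatic.

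First I would use the hypothesis $\rk G_Y = 0$ — which says that every point $\gp_i \in \CS = \{\gp_1, \dotsc, \gp_n\}$ has a class that is $2$-divisible in $\PicX$ — to pick elements $\lambda_i \in \un$ with $\divX \lambda_i = \gp_i + 2\SE_i$ for suitable divisors $\SE_i$. Each $\lambda_i$ has odd valuation at $\gp_i$ and even valuation elsewhere, so $\lambda_i \in \sing{Y}$. Combining \cite[Proposition~2.3.(1)]{CKR18} with Proposition~\ref{prop:2rank_by_GY} yields $\rk \sfrac{\sing{Y}}{\dl{Y}} = n$; linear independence of the $\lambda_i$ modulo $\dl{Y}$ follows by reading off valuations at the $\gp_j$, so $\{\lambda_i \dl{Y}\}$ is an $\FF_2$-basis.

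Next, for each $\gp \in \CS$ I would define a wild local automorphism $\vt_\gp \colon \sqgd{K_\gp} \iso \sqgd{K_\gp}$ that fixes $1$ and $-1$ (the latter required by Observation~\ref{obs:-1to-1} in the eventual extension). Such a wild automorphism exists precisely when $-1$ is a local square at $\gp$; a natural choice is the swap $\pi_\gp \leftrightarrow u_\gp$, which reverses parity of valuation. With the $\vt_\gp$ chosen, I would define $\vt$ on the basis by $\vt(\lambda_i \dl{Y}) := \mu_i \dl{Y}$, where $\mu_i \in \sing{Y}$ has local square class $\vt_{\gp_j}(\lambda_i \cdot \squares{K_{\gp_j}})$ at each $\gp_j \in \CS$, and then extend $\FF_2$-linearly.

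The main obstacle will be producing these witnesses $\mu_i$: one must check that for each $i$ the tuple $\bigl(\vt_{\gp_j}(\lambda_i \cdot \squares{K_{\gp_j}})\bigr)_j$ lies in the image of $\check{\imath}$. This forces careful coordination between the local choices $\vt_{\gp_j}$ and the global subspace $\check{\imath}\bigl(\sfrac{\sing{Y}}{\dl{Y}}\bigr) \subset \prod_{\gp\in\CS} \sqgd{K_\gp}$, which by the rank count above has $\FF_2$-codimension $n$ in the ambient product. Once the $\vt_\gp$ are coordinated so that this subspace is preserved, condition (PE4) holds on the basis by construction and extends by linearity; $\vt$ is an isomorphism because each $\vt_{\gp_j}$ is; and Theorem~\ref{thm:extending_pre_eq} supplies a self-equivalence that is tame on $Y$ and wild at every $\gp_i \in \CS$ by the wildness of $\vt_{\gp_i}$.
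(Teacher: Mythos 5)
First, a point of comparison: the paper does not prove this statement at all --- it is imported verbatim from \cite[Theorem~4.10]{CKR18} --- so you are attempting a new proof using the pre-equivalence machinery of the present paper. Your opening moves are sound: $\rk G_Y=0$ gives $\lambda_i$ with $\divX\lambda_i=\gp_i+2\SE_i$, the rank count $\rk\sfrac{\sing{Y}}{\dl{Y}}=n$ is correct, and the $\lambda_i$ are indeed independent modulo $\dl{Y}$. (One small omission: you need $-1\in\squares{K_{\gp_i}}$ for a wild local automorphism fixing $-1$ to exist at all; this does hold, but only because $\class{\gp_i}\in2\PicX$ forces $\deg\gp_i$ to be even via Observation~\ref{obs_even_degree_of_even_div}, so $-1$ is a square in the residue field --- the argument used in the proof of Proposition~\ref{prop:arbitrary_2rank}. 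You should say this.)

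The genuine gap is the step you yourself flag as ``the main obstacle'' and then dispose of with ``once the $\vt_\gp$ are coordinated.'' That coordination is the entire content of the theorem, and with $\vT=\id$ it is not always achievable. Concretely, normalize $\pi_{\gp_i}:=\lambda_i$; then the image $V:=\check{\imath}\bigl(\sfrac{\sing{Y}}{\dl{Y}}\bigr)$ is the graph of the symmetric matrix $\phi=(c_{ij})$ over $\FF_2$, where $c_{ij}=1$ iff $\lambda_i\notin\squares{K_{\gp_j}}$ (equivalently, iff $\gp_i\not\smile\gp_j$ in the notation of Section~4; symmetry is Hilbert reciprocity). Writing each candidate $\vt_{\gp_j}$ as a matrix in $GL_2(\FF_2)$ and imposing wildness, the requirement $\bigl(\prod_j\vt_{\gp_j}\bigr)(V)=V$ becomes a system of linear and quadratic conditions on the entries. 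Already for $n=1$ this system forces $\vt_{\gp}(\lambda_1)=\lambda_1$ and $u_\gp\mapsto u_\gp\lambda_1$, so your ``natural choice'' of swap works only for one of the two possible labellings of the uniformizer --- it is not a free choice. Worse, for $n=3$ with $c_{12}=c_{13}=c_{23}=1$ (three pairwise non-related $2$-divisible points) the system is inconsistent: \emph{no} choice of wild local automorphisms preserves $V$, so no pre-equivalence with $\vT=\id$ exists and Theorem~\ref{thm:extending_pre_eq} cannot be invoked. This is precisely why the paper's own constructions in the harder cases do not keep $\vT$ equal to the identity: Lemma~\ref{lem:rank1n3} moves $\gp_3$ to an auxiliary point $\gp_4$ produced by \cite[Lemma~2.1]{LW92}, and Proposition~\ref{prop:arbitrary_2rank} must assume $\gq_i\smile\gq_j$ to avoid exactly this obstruction. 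To complete your argument you would need either to permute the points of $\CS$ or to enlarge the target by auxiliary $2$-divisible points so as to realize the required tuples in the image of $\check{\jmath}$; as written, the proposal does not establish the theorem.
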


Now we turn our attention to sets of rank~$1$. Theorem~\ref{thm:half_points_lin_indep} asserts that very such set must consist of at least two points. We need to consider separately the cases when the wild set in question consists of just two or three points. We do it in Lemmas~\ref{lem:rank1n2} and~\ref{lem:rank1n3}, respectively. Then we deal with the general case in Theorem~\ref{thm:rank1}

\begin{lem}\label{lem:rank1n2} 
Let $\gp, \gq\in X$ be two distinct points. If the following two conditions hold
\[
\rk G_{X\setminus\{\gp,\gq\}} = 1\qquad\text{and}\qquad -1\in \squares{K_\gp}\cap \squares{K_\gq}
\]
then there is a self-equivalence $\Tt$ of~$K$ satisfying
\[
T\bigl(\{\gp,\gq\}\bigr) = \{ \gp, \gq\} = \wild{\Tt}.
\]
In particular $\{\gp, \gq\}$ is a wild set.
\end{lem}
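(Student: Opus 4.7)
The plan is to construct a pre-equivalence $\preeq{\CS}{}$ on $\CS := \{\gp,\gq\}$ with $\vT\CS = \CS$ and then invoke Theorem~\ref{thm:extending_pre_eq}. Setting $\vT\CS = \CS$ will automatically yield the conclusion $T\CS = \CS$, and if both local components $\vt_\gp, \vt_\gq$ are chosen to be parity-changing, the extension will be tame outside $\CS$ and wild exactly at $\gp$ and $\gq$, so $\wild{\Tt} = \CS$.

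First I would pin down the algebraic setup. Combining \cite[Proposition~2.3]{CKR18} with Proposition~\ref{prop:2rank_by_GY}, the assumption $\rk G_{X\setminus\CS} = 1$ gives $\rk \Sing{X\setminus\CS} = \rk\PicZeroX + 2$ and $\rk \Dl{X\setminus\CS} = \rk\PicZeroX$, so that $\sfrac{\sing{X\setminus\CS}}{\dl{X\setminus\CS}}$ is an $\FF_2$-vector space of rank $2$. Under $\check{\imath}$ it embeds as a $4$-element subgroup $V \subseteq \sqgd{K_\gp}\times\sqgd{K_\gq}$. The intersection $V \cap \bigl(\{1,u_\gp\}\times\{1,u_\gq\}\bigr)$, coming from the image of the global singular group $\SingX$, has $\FF_2$-rank $1$, while the complementary direction in $V$ records the "wild parity" information dictated by which of $\class{\gp}, \class{\gq}\in\PicX/2\PicX$ vanishes.

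Next I would choose the local pieces. The hypothesis $-1\in\squares{K_\gp}\cap\squares{K_\gq}$ makes $-1$ the trivial class in both $\sqgd{K_\gp}$ and $\sqgd{K_\gq}$, so Observation~\ref{obs:-1to-1} imposes no constraint on $\vt_\gp$ or $\vt_\gq$; every parity-changing automorphism preserving local squares is admissible. Depending on which of $\class\gp, \class\gq$ is $2$-divisible in $\PicX$, and on the form of $V\cap(\{1,u_\gp\}\times\{1,u_\gq\})$, I would set $\vT$ either to the identity of $\CS$ or to the transposition $\gp\leftrightarrow\gq$, and pick parity-changing $\vt_\gp, \vt_\gq$ so that the product $\vt_\gp\times\vt_\gq$ sends $V$ exactly onto the image of $\check{\jmath}$. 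Once this intertwining is in place, $\vt$ is forced to be the unique isomorphism making diagram (PE4) commute.

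Finally, the rank condition required by Theorem~\ref{thm:extending_pre_eq} is automatic from $\vT\CS = \CS$, so the theorem produces a self-equivalence $\Tt$ with $T\restriction_\CS = \vT$, tame outside $\CS$, and acting at each $\gp\in\CS$ by the prescribed $\vt_\gp$; together with the parity-changing property this gives $\wild\Tt = \CS$ and $T\CS = \CS$. The main obstacle is the step described in the previous paragraph: one must verify, in each of the three configurations of $(\class\gp, \class\gq)$ in $\PicX/2\PicX$, that parity-changing local maps whose product stabilizes $V$ (up to the swap $\sigma$ in the transposition case) actually exist. This is where the $-1$-assumption — both directly, via trivializing Observation~\ref{obs:-1to-1}, and indirectly through the parity constraints it places on $\deg\gp, \deg\gq$ via the residue-field criterion for $-1$ being a square — is what frees up enough automorphisms of $\sqgd{K_\gp}\times\sqgd{K_\gq}$ to carry $V$ to its required image.
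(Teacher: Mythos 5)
Your overall strategy is exactly the paper's: build a pre-equivalence on $\CS=\{\gp,\gq\}$ with $\vT\CS=\CS$ and both local components parity-changing, then extend via Theorem~\ref{thm:extending_pre_eq} (whose rank hypothesis is indeed automatic here). Your rank bookkeeping is also correct: $\rk\sfrac{\sing{X\setminus\CS}}{\dl{X\setminus\CS}}=2$, and the image of $\SingX$ contributes a rank-one ``unit'' part of the subgroup $V\subseteq\sqgd{K_\gp}\times\sqgd{K_\gq}$.

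The problem is that you stop at what you yourself call ``the main obstacle'': verifying that parity-changing local maps carrying $V$ to the required image actually exist. This is not a routine check that the $-1$-hypothesis settles by a counting or freeness argument --- it is the substantive core of the proof, and a priori it can fail. For instance, in the configuration $\class{\gq}\in 2\PicX$, $\class{\gp}\notin 2\PicX$, if the generator $\lambda\in\SingX\setminus\squares{K_\gp}$ happened also to be a nonsquare at $\gq$, then $V$ would be spanned by $(u_\gp,u_\gq)$ and an element with second coordinate of odd valuation, and no pair of parity-changing isomorphisms would stabilize such a $V$. What the paper does, and what is missing from your argument, is to pin down $V$ exactly: it produces $\lambda\in\SingX\setminus\squares{K_\gp}$ from \cite[Proposition~3.4]{CKR18}, produces $\mu$ with $\divX\mu=\gq+2\SD$ (resp.\ $\gp+\gq+2\SD$) from the $2$-divisibility data, normalizes $\mu$ modulo $\lambda$, and then uses the Hilbert reciprocity law to force $\lambda\in\squares{K_\gq}$ in the first case (resp.\ to force $\lambda$ to be simultaneously $\gp$- and $\gq$-primary in the second). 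Only after this computation does one know that $V=\{1,u_\gp\}\times\{1,\pi_\gq\}$, respectively the diagonal subgroup, and can one write down the explicit swap (resp.\ identity) $\vT$ together with compatible $\vt_\gp,\vt_\gq$. You need to supply this reciprocity argument; without it the existence claim in your third paragraph is unsupported. (Also note the case split is genuinely two cases up to symmetry, not three, since $\rk G_{X\setminus\CS}=1$ excludes both classes being $2$-divisible and excludes their independence.)
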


\begin{proof}
Denote $\CS := \{\gp, \gq\}$ and $Y := X\setminus \CS$. The $2$-rank of $G_Y$ is~$1$, hence the classes of~$\gp$ and~$\gq$ in $\PicX$ cannot be simultaneously $2$-divisible. Neither they can be linearly independent over~$\FF_2$ in $\sfrac{\PicX}{2\PicX}$. Thus we are left with only two possibilities: either one of the two classes $\class\gp, \class\gq\in \PicX$ is $2$-divisible while the other is not, or none of them is $2$-divisible but $\class{\gp+\gq}\in 2\PicX$. We shall investigate these two cases independently.

First assume that $\class\gp\notin 2\PicX$ and $\class\gq\in 2\PicX$. Then \cite[Proposition~3.4]{CKR18} asserts that there is an element $\lambda\in \SingX$ which is not a local square at~$\gp$, i.e. $\lambda\in \SingX\setminus\squares{K_\gp}$. On the other hand, from the condition $\gq\in 2\PicX$ we infer that there are: an element $\mu\in \un$ and a divisor $\SD\in \DivX$ such that
\[
\divX\mu = \gq + 2\SD.
\]
Thus~$\mu$ has an even valuation everywhere except at~$\gq$. In particular, we have $\ord_\gp\mu\equiv 0\pmod{2}$ and so either~$\mu$ is a square at~$\gp$ or $\mu\equiv \lambda\pmod{\squares{K_\gp}}$. Without loss of generality we may assume that $\mu\in \squares{K_\gp}$. If it is not the case, we may always replace~$\mu$ by $\mu\cdot \lambda$ and~$\SD$ by $\SD + \sfrac12\cdot \divX\lambda$ (notice that $\divX\lambda\in 2\PicX$ since $\lambda\in \SingX$).

Now $\lambda\in \SingX$ hence it has an even valuation everywhere on~$X$ and, as noticed above, $\mu$ has an even valuation everywhere but in~$\gq$. Thus, for every point $\gr\in X\setminus \{\gq\}$ the Hilbert symbol $(\lambda,\mu)_\gr$ is trivial. Hilbert reciprocity law implies that also $(\lambda,\mu)_\gq = 1$. Therefore~$\lambda$ is a local square at~$\gq$.

It follows from the previous two paragraphs that the quotient space $\sfrac{\SingY}{\Dl{Y}}$ is spanned (over~$\FF_2$) by square classes of~$\lambda$ and~$\mu$. Indeed, \cite[Proposition~2.3]{CKR18} states that $\rk\sfrac{\SingY}{\Dl{Y}} = 2$. Moreover $\lambda\notin \Dl{Y}$ since $\lambda\notin \squares{K_\gp}$ and $\mu\notin \Dl{Y}$ because $\ord_\gq\mu \equiv 1\pmod{2}$.  Finally they are linearly independent as $\lambda\not\equiv \mu\pmod{\squares{K_\gq}}$. Thus, the local square-class groups at~$\gp$ and~$\gq$ are
\[
\sqgd{K_\gp} = \big\{ 1 \equiv \mu, \lambda, \pi_\gp, \lambda\cdot \pi_\gp\bigr\}
\qquad\text{and}\qquad
\sqgd{K_\gq} = \bigl\{ 1\equiv \lambda, u_\gq, \mu, u_\gq\cdot \mu\bigr\},
\]
for some $\gp$-uniformizer~$\pi_\gp$ and $\gq$-primary unit~$u_\gq$. Construct a triple $\bigl(\vT, \vt, (\vt_\gp, \vt_\gq)\bigr)$:
\begin{align*}
\vT :\CS&\to \CS, & \vT(\gp) &:= \gq, & \vT(\gq) &:= \gp,\\
\vt :\sfrac{\SingY}{\Dl{Y}}&\to \sfrac{\SingY}{\Dl{Y}}, & \vt(\lambda) &:= \mu, & \vt(\mu) & := \lambda,\\
\vt_\gp : \sqgd{K_\gp}&\to \sqgd{K_\gq}, & \vt_\gp(\lambda) &:= \mu, & \vt_\gp(\pi_\gp) &:= u_\gq,\\
\vt_\gq : \sqgd{K_\gq}&\to \sqgd{K_\gp}, & \vt_\gq(u_\gq) &:= \pi_\gp, & \vt_\gq(\mu) &:= \lambda,
\end{align*}
It is clear that $\bigl(\vT, \vt, (\vt_\gp, \vt_\gq)\bigr)$ is a pre-equivalence and~$\CS$ is the wild set of the induced self-equivalence.

Now assume that $\class\gp, \class\gq$ are not $2$-divisible in $\PicX$ but $[\gp+\gq]$ is $2$-divisible. The proof runs along similar lines as in the previous case. Let again $\lambda, \mu\in K$ be such that $\lambda\in \SingX\setminus \squares{K_\gp}$ and $\divX\mu = \gp+\gq+2\SD$ for some divisor~$\SD$. Then $\ord_\gp\mu\equiv \ord_\gq\mu\equiv 1 \pmod{2}$, hence the Hilbert symbol $(\lambda, \mu)_\gp$ is non-trivial. Hilbert reciprocity law implies that $(\lambda,\mu)_\gp$ is non-trivial, either. Consequently~$\lambda$ is simultaneously a $\gp$-primary and $\gq$-primary unit. It follows that $\sfrac{\SingY}{\Dl{Y}} = \lin_{\FF_2}\{\lambda, \mu\}$ and the local square-class groups at~$\gp$ and~$\gq$ are
\[
\sqgd{K_\gp} = \big\{ 1, \lambda, \mu, \lambda\cdot \mu\bigr\}
\qquad\text{and}\qquad
\sqgd{K_\gq} = \bigl\{ 1, \lambda, \mu, \lambda\cdot \mu\bigr\}.
\]
Define a pre-equivalence $\bigl(\vT, \vt, (\vt_\gp, \vt_\gq)\bigr)$ as follows. Let $\vT :\CS\to \CS$ be an identity and
\begin{align*}
\vt :\sfrac{\SingY}{\Dl{Y}}&\to \sfrac{\SingY}{\Dl{Y}}, & \vt(\lambda) &:= \mu, & \vt(\mu) & := \lambda,\\
\vt_\gp : \sqgd{K_\gp}&\to \sqgd{K_\gp}, & \vt_\gp(\lambda) &:= \mu, & \vt_\gp(\mu) &:= \lambda,\\
\vt_\gq : \sqgd{K_\gq}&\to \sqgd{K_\gq}, & \vt_\gq(\lambda) &:= \mu, & \vt_\gq(\mu) &:= \lambda.
\end{align*}
Then~$\CS$ is the wild set of the induced self-equivalence.
\end{proof}

Now we must scrutinously investigate the case when a wild set consists of precisely three points.

\begin{lem}\label{lem:rank1n3} 
Let $\gp_1, \gp_2, \gp_3$ be three distinct points. Assume that the following two conditions hold:
\[
\rk G_{X\setminus\{\gp_1,\gp_2,\gp_3\}} = 1
\qquad\text{and}\qquad
-1\in \squares{K_{\gp_1}}\cap \squares{K_{\gp_2}}\cap \squares{K_{\gp_3}}.
\]
Then $\{\gp_1, \gp_2, \gp_3\}$ is a wild set.
\end{lem}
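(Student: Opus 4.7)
We argue by case analysis on the $2$-divisibility of the classes $\class{\gp_1}, \class{\gp_2}, \class{\gp_3}$ in $\sfrac{\PicX}{2\PicX}$. Since $\rk G_{X\setminus\{\gp_1,\gp_2,\gp_3\}} = 1$, the nonzero ones among these must coincide, giving three cases after renumbering: (i) only $\class{\gp_1}$ is non-$2$-divisible; (ii) $\class{\gp_1} \equiv \class{\gp_2} \not\equiv 0$ with $\class{\gp_3}\in 2\PicX$; (iii) all three classes coincide modulo $2\PicX$ and are non-$2$-divisible.

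In cases~(i) and~(ii), the pair $\{\gp_1,\gp_2\}$ satisfies the hypotheses of Lemma~\ref{lem:rank1n2} and is therefore the wild set of a self-equivalence $\Tt[1]$. Since $\gp_3$ lies outside $\wild{\Tt[1]}$, Observation~\ref{obs:local_squares} keeps $-1$ square at $\gp_3$, and because $\class{\gp_3}\in 2\PicX$ the singleton $\{\gp_3\}$ is itself the wild set of a self-equivalence $\Tt[2]$ by Theorem~\ref{thm:rank0}. By the freedom in the extension step of Theorem~\ref{thm:extending_pre_eq} together with the small-equivalence machinery of~\cite{PSCL94}, one arranges $T_1(\gp_3)=\gp_3$ so that $\CS_1:=\{\gp_1,\gp_2\}$ and $\CS_2:=\{\gp_3\}$ are disjoint in the sense of Proposition~\ref{prop:union}; that proposition then yields a self-equivalence with wild set $\{\gp_1,\gp_2,\gp_3\}$.

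Case~(iii) requires a direct pre-equivalence on $\CS:=\{\gp_1,\gp_2,\gp_3\}$ since no singleton among the $\gp_i$'s is a wild set. Write $Y:=X\setminus\CS$. By Proposition~\ref{prop:2rank_by_GY} and \cite[Proposition~2.3]{CKR18}, $\rk\sfrac{\SingY}{\Dl{Y}}=3$. For each pair $i\neq j$ we have $\class{\gp_i+\gp_j}\in 2\PicX$, so there exist $\mu_{ij}\in\un$ with $\divX\mu_{ij}=\gp_i+\gp_j+2\SD_{ij}$; Hilbert reciprocity applied to the pairings $(\nu,\mu_{ij})$ for $\nu\in\SingX$ then shows that $\rk\sfrac{\SingX}{\Dl{Y}}=1$, the nontrivial class being represented by an element $\lambda\in\SingX$ that is a primary unit at every $\gp_i$. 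Consequently $\{\lambda,\mu_{12},\mu_{13}\}$ is a basis of $\sfrac{\SingY}{\Dl{Y}}$, and the image $W:=\check\imath\bigl(\sfrac{\SingY}{\Dl{Y}}\bigr)\subset\prod_{\gp\in\CS}\sqgd{K_\gp}$ is pinned down up to $\FF_2$-parameters satisfying further Hilbert-reciprocity identities. Setting up the pre-equivalence $(\vT,\vt,\vt_{\gp_i})$ with each $\vt_{\gp_i}$ wild, the natural choice $\vT=\id$ on $\CS$ fails on a parity obstruction: wildness forces the $\bar\pi_{\gp_i}$-component of $\vt_{\gp_i}(\bar u_{\gp_i})$ to equal one, so $\bigl(\prod_{\gp\in\CS}\vt_\gp\bigr)\bigl(\check\imath(\lambda)\bigr)$ has $\bar\pi_{\gp_i}$-coordinate one at every $\gp_i$, while Hilbert reciprocity forces the $\bar\pi_{\gp_i}$-components of any vector of $W$ to sum to zero over $\FF_2$. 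We circumvent this by taking $\vT$ to send $\gp_3$ to an auxiliary point $\gp_3'\neq\gp_3$ with $\class{\gp_3'}\in 2\PicX$; such points exist in abundance and keep $\rk G_{X\setminus\vT\CS}=1$, the hypothesis of Theorem~\ref{thm:extending_pre_eq}, while reshaping the target image so that the obstruction disappears. The remaining commutativity condition~\ref{it:PE4} then reduces to a consistent $\FF_2$-linear system for the entries of the $\vt_{\gp_i}$'s, and Theorem~\ref{thm:extending_pre_eq} delivers a self-equivalence with wild set exactly $\{\gp_1,\gp_2,\gp_3\}$. The principal obstacle is isolating the parity phenomenon in case~(iii) and choosing $\vT\CS$ correctly to eliminate it.
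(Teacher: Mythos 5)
Your strategy coincides with the paper's: split according to how many of the classes $\class{\gp_1},\class{\gp_2},\class{\gp_3}$ are $2$-divisible, dispose of the cases containing a $2$-divisible point by composing a rank-$0$ wild singleton with Lemma~\ref{lem:rank1n2} via Proposition~\ref{prop:union}, and in the remaining case build a pre-equivalence that moves $\gp_3$ to an auxiliary point of $2$-divisible class. Your diagnosis of the parity obstruction to taking $\vT=\id$ in case~(iii) is correct and is precisely why the paper introduces a fourth point. However, two steps are genuinely incomplete. In cases~(i) and~(ii) you compose ``pair first, singleton second'', so you need a self-equivalence whose wild set is $\{T_1\gp_3\}$. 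The claim that one can ``arrange $T_1(\gp_3)=\gp_3$'' is unsupported: Lemma~\ref{lem:rank1n2} controls $T_1$ only on $\{\gp_1,\gp_2\}$, and neither Theorem~\ref{thm:extending_pre_eq} nor the extension machinery of \cite{PSCL94} lets you prescribe $T$ at a given point outside the small-equivalence set. The alternative --- that $\class{T_1\gp_3}$ remains $2$-divisible --- is true but is itself a nontrivial fact: Proposition~\ref{prop:Tt_preserves_2ranks} preserves only ranks of spans, not $2$-divisibility of an individual class, and the paper proves such a statement inside Proposition~\ref{prop:arbitrary_2rank} only by constructing an explicit element $\Lambda$ and invoking \cite[Proposition~3.2]{CKR18}. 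The paper's proof avoids the issue entirely by composing in the opposite order: first the singleton $\{\gp_3\}$, then Lemma~\ref{lem:rank1n2} applied to $\{T_1\gp_1,T_1\gp_2\}$ (whose hypotheses are checked via Proposition~\ref{prop:Tt_preserves_2ranks} and Observations~\ref{obs:local_squares} and~\ref{obs:-1to-1}), so that $\CS_2=\{\gp_1,\gp_2\}$ is automatically disjoint from $\{\gp_3\}$.

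In case~(iii) the construction is asserted rather than carried out. Finding the replacement point is not just a matter of points with $2$-divisible class existing ``in abundance'': what is needed is a point $\gp_4$ \emph{together with} an element $\nu\in\sing{X\setminus\{\gp_4\}}$ having odd valuation at $\gp_4$ and prescribed square classes at $\gp_1$ and $\gp_2$; the paper obtains this pair from \cite[Lemma~2.1]{LW92}, and without such a $\nu$ your ``consistent $\FF_2$-linear system'' need not be consistent. Indeed, the consistency of condition~\eqref{it:PE4} is the entire content of the hard case: the paper establishes it by computing all four local square-class groups explicitly in terms of $\mu$, $\nu$, $\lambda_{12}$, $\lambda_{23}$ (which requires several further Hilbert-reciprocity evaluations, e.g.\ to show $\mu$ is a local square at $\gp_4$ and $\lambda_{12}$ is a $\gp_4$-primary unit) and then exhibiting maps $\vt,\vt_{\gp_1},\vt_{\gp_2},\vt_{\gp_3}$ that commute with the restriction maps and flip valuation parity at each of $\gp_1,\gp_2,\gp_3$. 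You have located the obstruction and the correct place to look for its resolution, but the proof that the resolution exists --- the explicit pre-equivalence --- is missing.
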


\begin{proof}
Denote the set consisting of the three distinguished points $\gp_1, \gp_2, \gp_3$ by~$\CS$. First suppose that one of the three points is $2$-divisible in $\PicX$, say $\class{\gp_3}\in 2\PicX$. Theorem~\ref{thm:rank0} asserts that there is a self-equivalence $\Tt[1]$ of~$K$ such that $\gp_3$ is its unique wild point. Since $\Tt[1]$ is tame on $X\setminus\CS$, we infer from Proposition~\ref{prop:Tt_preserves_2ranks} that
\[
\rk G_{X\setminus \{T_1\gp_1, T_1\gp_2\}} \leq \rk G_{X\setminus\{T_1\gp_1,T_1\gp_2,T_1\gp_3\}} = \rk G_{X\setminus\{\gp_1,\gp_2,\gp_3\}} = 1.
\]
Moreover, $-1 = t_1(-1) \in \squares{K_{\gp_1}}\cap \squares{K_{\gp_2}}$ by Observations~\ref{obs:local_squares} and~\ref{obs:-1to-1}. Thus, the previous lemma says that there is a self-equivalence $\Tt[2]$ with a wild set $\wild{\Tt[2]} = \{ T_1\gp_1, T_1\gp_2\}$. It follows that $\CS$ is a wild set of the composition $(T_2\circ T_1, t_2\circ t_1)$ by Proposition~\ref{prop:union}.

For the rest of the proof we assume that $\class{\gp_i}\notin 2\PicX$ for $1\leq i\leq 3$. The assumption $\rk G_{X\setminus \CS} =1$ implies that $\class{\gp_i + \gp_j}\in 2\PicX$ for all $i,j$ such that $i\neq j$. Thus, for $1\leq i<j\leq 3$ there is an element $\lambda_{ij}\in K$ and a divisor $\SD_{ij}\in \DivX$ such that
\[
\gp_i + \gp_j + 2\SD_{ij} = \divX\lambda_{ij}.
\]
In particular, $\lambda_{ij}\in \sing{X\setminus \{\gp_i,\gp_j\}}$. Without loss of generality we may assume that $\lambda_{23} = \lambda_{12}\cdot \lambda_{13}$.

The class of the point~$\gp_1$ is not $2$-divisible in $\PicX$, hence \cite[Proposition~3.4]{CKR18} asserts that there is an element $\mu\in \singX$ which is not a local square at~$\gp_1$. It must, therefore, be congruent modulo~$\squares{K_{\gp_1}}$ to a $\gp_1$-primary unit. By Hilbert reciprocity law we then have
\[
1
= \prod_{\gp\in X} (\mu, \lambda_{23})_\gp
= (\mu, \lambda_{23})_{\gp_2}(\mu, \lambda_{23})_{\gp_3}
\]
and so $(\mu, \lambda_{23})_{\gp_2} = (\mu, \lambda_{23})_{\gp_3} = -1$. It follows that
\[
\mu\equiv u_{\gp_i}\pmod{\squares{K_{\gp_i}}}
\qquad\text{for }i\in \{1,2,3\}.
\]
Here, as usual, $u_{\gp_i}$ is a $\gp_i$-primary unit. Replacing~$\lambda_{12}$ by $\mu\lambda_{12}$ (and consequently~$\lambda_{23}$ by~$\mu\lambda_{23}$) if needed, we may assume that $\lambda_{12}$ is not a local square at~$\gp_3$.

Now, \cite[Lemma~2.1]{LW92} asserts that there is a point $\gp_4\in X\setminus\CS$ and an element $\nu\in \sing{X\setminus \{\gp_4\}}$ such that $\ord_{\gp_4}\nu$ is odd and
\[
\nu\equiv 1\pmod{\squares{K_{\gp_1}}},\qquad
\nu\equiv \mu\pmod{\squares{K_{\gp_1}}}.
\]
Let us denote $\CS':= \{\gp_1, \gp_2, \gp_4\}$.

Our next step is to describe local square-class groups $\sqgd{K_{\gp_k}}$ for $k\leq 4$ in terms of $\mu, \nu$ and $\lambda_{i,j}$. It is well known that a square-class group of a non-dyadic local field~$K_\gp$ consists of four square-classes, namely: $1, u_\gp, \pi_\gp$ and $u_\gp\pi_\gp$, where~$u_\gp$ is a $\gp$-primary unit and~$\pi_\gp$ is a $\gp$-uniformizer. From what we have proved so far we infer:
\[
\begin{array}{c|cccc}
k & 1 & u_{\gp_k} & \pi_{\gp_k} & u_{\gp_k}\pi_{\gp_k}\\\hline
1 & 1\equiv \nu & \mu & \lambda_{12}\equiv \nu\lambda_{12} & \mu\lambda_{12}\\
2 & 1 & \mu\equiv\nu & \lambda_{12} & \mu\lambda_{12}\equiv \nu\lambda_{12}
\end{array}
\]
Next we describe $\sqgd{K_{\gp_3}}$. We know that~$\mu$ is a $\gp_3$-primary unit and $\ord_{\gp_3}\lambda_{13}\equiv \ord_{\gp_3}\lambda_{23}$ is odd, while $\ord_{\gp_3}\lambda_{12}$ is even. Moreover, we have $\lambda_{12}\equiv \mu$ (and so also $\lambda_{13}\equiv \mu\lambda_{23}$) modulo $\squares{K_{\gp_3}}$. Thus the third line of the table reads as
\[
\begin{array}{c|cccc}
k & 1 & u_{\gp_k} & \pi_{\gp_k} & u_{\gp_k}\pi_{\gp_k}\\\hline
3 & 1 & \mu\equiv\lambda_{12} & \lambda_{23}\equiv \mu\lambda_{13} & \mu\lambda_{23}\equiv \lambda_{13}
\end{array}
\]
It remains to describe $\sqgd{K_{\gp_4}}$. Now $\nu\in \sing{X\setminus \{\gp_4\}}$ and $\mu \in \singX\subset \sing{X\setminus \{\gp_4\}}$, hence for every point $\gp\in X$, $\gp\neq \gp_4$ we have $\ord_\gp \mu\equiv \ord_\gp\nu\equiv 0\pmod{2}$ and so the Hilbert symbol $(\mu, \nu)_\gp$ vanishes everywhere on $X\setminus\{\gp_4\}$. It follows from Hilbert reciprocity law that $(\mu, \nu)_{\gp_4} = 1$, too. Thus~$\mu$ must be a local square at~$\gp_4$, since $\ord_{\gp_4}\nu$ is odd. In order to identify a $\gp_4$-primary unit, observe that both~$\nu$ and~$\lambda_{12}$ lie in $\sing{X\setminus \CS'}$. Using Hilbert reciprocity law one more time we obtain that
\begin{multline*}
1
= \prod_{\gp\in X} (\nu, \lambda_{12})_\gp
= (\nu, \lambda_{12})_{\gp_1}(\nu, \lambda_{12})_{\gp_2}(\nu, \lambda_{12})_{\gp_4}
\\
= (1, \lambda_{12})_{\gp_1}(\mu, \lambda_{12})_{\gp_2}(\nu, \lambda_{12})_{\gp_4}
= -(\nu, \lambda_{12})_{\gp_4}.
\end{multline*}
It follows that~$\lambda_{12}$ is a $\gp_4$-primary unit and we may now complete the table:
\[
\begin{array}{c|cccc}
k & 1 & u_{\gp_k} & \pi_{\gp_k} & u_{\gp_k}\pi_{\gp_k}\\\hline
4 & 1\equiv \mu & \lambda_{12} & \nu & \nu\lambda_{12}
\end{array}
\]

The quotient groups $\sfrac{\Sing{X\setminus \CS}}{\Dl{X\setminus\CS}}$ and $\sfrac{\Sing{X\setminus \CS'}}{\Dl{X\setminus\CS'}}$ may be treated as $\FF_2$-vector spaces. From the above explicit description of the local square-class groups we infer that the sets
\[
\SB = \{ \mu, \lambda_{12}, \lambda_{23} \}
\qquad\text{and}\qquad
\SB' = \{ \mu, \lambda_{12}, \nu \}
\]
are bases of $\sfrac{\Sing{X\setminus \CS}}{\Dl{X\setminus\CS}}$ and $\sfrac{\Sing{X\setminus \CS'}}{\Dl{X\setminus\CS'}}$, respectively. Thus we may construct a pre-equivalence $\bigl(\vT, \vt, (\vt_{\gp_1}, \vt_{\gp_2}, \vt_{\gp_3})\bigr)$. Let $\vT:\CS\to X$ send
\[
\vT(\gp_1) := \gp_1, \qquad
\vT(\gp_2) := \gp_2, \qquad
\vT(\gp_3) := \gp_4.
\]
Next let the remaining maps
\begin{align*}
\vt : \sfrac{\Sing{X\setminus \CS}}{\Dl{X\setminus\CS}} &\to \sfrac{\Sing{X\setminus \CS'}}{\Dl{X\setminus\CS'}}, &
\vt_{\gp_1} : \sqgd{K_{\gp_1}} &\to \sqgd{K_{\gp_1}},\\\
\vt_{\gp_2} : \sqgd{K_{\gp_2}} &\to \sqgd{K_{\gp_2}}, &
\vt_{\gp_3} : \sqgd{K_{\gp_3}} &\to \sqgd{K_{\gp_4}}
\end{align*}
be the unique group homomorphisms that satisfy the following conditions
\begin{align*}
\vt(\mu) &= \nu\lambda_{12} & \vt(\lambda_{12}) &= \mu & \vt(\lambda_{23}) &= \mu\lambda_{12}\\
\vt_{\gp_1}(\mu) &= \nu\lambda_{12} = \lambda_{12} & \vt_{\gp_1}(\lambda_{12}) &= \mu\\
\vt_{\gp_2}(\mu) &= \nu\lambda_{12} = \mu\lambda_{12} & \vt_{\gp_2}(\lambda_{12}) &= \mu\\
\vt_{\gp_3}(\mu) &= \nu\lambda_{12} & \vt_{\gp_3}(\lambda_{23}) &= \mu\lambda_{12} = \lambda_{12}\\
\end{align*}
Theorem~\ref{thm:extending_pre_eq} asserts that this pre-equivalence extends to a self-equivalence $\Tt$ with a wild set $\wild{\Tt} = \CS$. This concludes the proof.
\end{proof}

\begin{thm}\label{thm:rank1} 
Let~$K$ be a global function field and~$X$ the associated smooth curve. Further let $n\geq 2$, $\gp_1, \dotsc, \gp_n\in X$ be distinct points such that $-1$ is a local square at each of them. Denote $\CS := \{\gp_1, \dotsc, \gp_n\}$. If $\rk G_{X\setminus \CS}\leq 1$, then 
$\CS$ is a wild set.
\end{thm}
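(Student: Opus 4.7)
The plan is to proceed by induction on $n = \card{\CS}$, using Proposition~\ref{prop:union} as the gluing device. If $\rk G_{X\setminus\CS} = 0$, Theorem~\ref{thm:rank0} settles everything, so I would focus on the case $\rk G_{X\setminus\CS} = 1$. The base cases $n = 2$ and $n = 3$ are supplied by Lemmas~\ref{lem:rank1n2} and~\ref{lem:rank1n3}.

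For the inductive step $n \geq 4$, I would split $\CS$ into two disjoint pieces $\CS_1$ and $\CS_2$ according to whether some point of $\CS$ already has a $2$-divisible class. If some $\class{\gp_i} \in 2\PicX$, I take $\CS_1 := \{\gp_i\}$, which is a wild set by Theorem~\ref{thm:rank0}, and let $\CS_2$ be the remaining $n-1$ points. Otherwise, the assumption $\rk G_{X\setminus\CS} = 1$ forces all the classes $\class{\gp_i} + 2\PicX$ to coincide in $\sfrac{\PicX}{2\PicX}$, so in particular $\class{\gp_1 + \gp_2} \in 2\PicX$ while neither $\class{\gp_1}$ nor $\class{\gp_2}$ is itself $2$-divisible; this forces $\rk G_{X\setminus\{\gp_1,\gp_2\}} = 1$, so Lemma~\ref{lem:rank1n2} applies and I take $\CS_1 := \{\gp_1, \gp_2\}$ and $\CS_2 := \{\gp_3, \dotsc, \gp_n\}$. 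In either case there is a self-equivalence $\Tt[1]$ with $\wild{\Tt[1]} = \CS_1$.

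To feed $\CS_2$ into the inductive hypothesis, I would transport it by $T_1$. Since $\Tt[1]$ is tame on $X\setminus\CS_1 \supseteq X\setminus\CS$, Proposition~\ref{prop:Tt_preserves_2ranks} applied with $Y = X\setminus\CS$ gives $\rk G_{X\setminus T_1\CS} = \rk G_{X\setminus\CS} = 1$, and since $G_{X\setminus T_1\CS_2}$ is spanned by a subset of the generators of $G_{X\setminus T_1\CS}$ it has rank at most $1$. Observations~\ref{obs:-1to-1} and~\ref{obs:local_squares} guarantee that $-1$ remains a local square at every point of $T_1\CS_2$. Since $\card{T_1\CS_2} \in \{n-1, n-2\}$, both of which lie in the range $[2, n-1]$, the inductive hypothesis yields a self-equivalence $\Tt[2]$ with $\wild{\Tt[2]} = T_1\CS_2$. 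Proposition~\ref{prop:union} then identifies $\CS_1 \cup \CS_2 = \CS$ with the wild set of the composition $(T_2 \circ T_1, t_2 \circ t_1)$.

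The main conceptual point is the dichotomy driving the case split: either some individual $\class{\gp_i}$ is $2$-divisible, or one-dimensionality of $G_{X\setminus\CS}$ forces every $\class{\gp_i + \gp_j}$ to be $2$-divisible; this is what allows a base-case lemma to be applied to produce $\Tt[1]$. The remaining bookkeeping, namely that $T_1\CS_2$ still satisfies the theorem's hypotheses with strictly smaller cardinality, is routine once Proposition~\ref{prop:Tt_preserves_2ranks} is in hand, so I do not expect real technical difficulty beyond identifying the correct decomposition.
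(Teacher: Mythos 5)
Your proof is correct and follows essentially the same route as the paper: induction on $n$ with base cases $n=2,3$ supplied by Lemmas~\ref{lem:rank1n2} and~\ref{lem:rank1n3}, peeling off a small wild subset, transporting the remaining points by $T_1$ (justified via Proposition~\ref{prop:Tt_preserves_2ranks} and Observations~\ref{obs:local_squares} and~\ref{obs:-1to-1}), and gluing with Proposition~\ref{prop:union}. Your explicit dichotomy --- either some $\class{\gp_i}$ is already $2$-divisible and you peel off a singleton via Theorem~\ref{thm:rank0}, or all classes equal the unique nonzero element of $G_{X\setminus\CS}$ so that $\rk G_{X\setminus\{\gp_1,\gp_2\}}=1$ exactly --- is a minor but welcome refinement, since the paper always peels off $\{\gp_1,\gp_2\}$ after recording only $\rk G_{X\setminus\{\gp_1,\gp_2\}}\leq 1$ and then invokes Lemma~\ref{lem:rank1n2}, whose hypothesis demands equality.
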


\begin{proof}
If $\rk G_Y = 0$, then $\gp_1, \dotsc, \gp_n\in 2\PicX$ and the assertion follows from Theorem~\ref{thm:rank0}. Thus we assume that $\rk G_Y = 1$. Proceed by an induction on~$n$. If $n=2$ or $n = 3$, we use Lemma~\ref{lem:rank1n2} and~\ref{lem:rank1n3}, respectively. Assume that $n\geq 3$. Denote $Z := X\setminus \{\gp_1, \gp_2\}$. We have $G_Z \subseteq G_Y$ and so $\rk G_Z\leq 1$. Lemma~\ref{lem:rank1n2} asserts that there is a self-equivalence $\Tt[1]$ such that
\[
\{\gp_1, \gp_2\} = T_1\bigl(\{\gp_1, \gp_2\}\bigr) = \wild{\Tt[1]}.
\]
The self-equivalence $\Tt[1]$ is tame on~$Z$, therefore
\[
\rk G_{T_1Z} =  \rk G_Z\leq 1
\]
by Proposition~\ref{prop:Tt_preserves_2ranks}. Moreover, $t_1(-1) = -1\in \squares{K_{\gp_i}}$ for every~$i$ by assumption, and~$t_1$ maps local squares to local squares by Observation~\ref{obs:local_squares}. Hence $-1$ is a local square at each $T_1\gp_3, \dotsc, T_1\gp_n$. It follows from the inductive hypothesis that there is a self-equivalence $\Tt[2]$ whose wild set is precisely
\[
\wild{\Tt[2]} = \{ T_1\gp_3, \dotsc, T_1\gp_n\}.
\]
The sets $\CS_1 = \{\gp_1,\gp_2\}$ and $\CS_2 = \{\gp_3, \dots, \gp_n\} = T_1^{-1}\wild{\Tt[2]}$ are trivially disjoint, hence their union is again a wild set by Proposition~\ref{prop:union}.
\end{proof}

Finally we focus our attention on wild sets of arbitrary rank. Recall that in~\cite{CKR18} we introduced a relation~$\smile$ on the set of points, whose classes are $2$-divisible in $\PicX$. Let $\gp, \gq\in X$ by such that $\class{\gp}, \class{\gq}\in 2\PicX$. The points $\gp, \gq$ related, denoted $\gp\smile \gq$, when $\Sing{X\setminus \{\gp\}}\setminus \SingX\subseteq \squares{K_\gq}$. This relation is symmetric by \cite[Lemma~4.3]{CKR18}.

\begin{prop}\label{prop:arbitrary_2rank} 
Let $\gp_1, \dotsc, \gp_m$ and $\gq_1, \dotsc, \gq_n$ be distinct points of~$X$, where $m\leq n$. Assume that the following conditions hold:
\begin{enumerate}
\item the classes of $\gp_1, \dotsc, \gp_m$ are linearly independent in $\sfrac{\PicX}{2\PicX}$;
\item the classes of $\gq_1, \dotsc, \gq_n$ are $2$-divisible in $\PicX$;
\item for every $i\leq m$ we have $-1\in \squares{K_{\gp_i}}$;
\item\label{it:pi_squares} for all $i,j\leq n$, $i\neq j$ we have $\gq_i\smile \gq_j$.
\end{enumerate}
Then $\{\gp_1, \dotsc, \gp_m, \gq_1, \dotsc, \gq_n\}$ is a wild set.
\end{prop}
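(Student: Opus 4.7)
The plan is to proceed by induction on~$m$, at each step peeling off one rank-1 point together with one $2$-divisible companion and glueing the resulting self-equivalences through Proposition~\ref{prop:union}. The base case $m=0$ is immediate from Theorem~\ref{thm:rank0}, because every $\class{\gq_j}$ being $2$-divisible forces $\rk G_{X\setminus\CS}=0$.

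For the inductive step I would isolate the pair $\CS_1:=\{\gp_m,\gq_m\}$. Since $\class{\gp_m}\notin 2\PicX$ and $\class{\gq_m}\in 2\PicX$, we have $\rk G_{X\setminus\CS_1}=1$; combined with $-1$ being a local square at both points, Lemma~\ref{lem:rank1n2} produces a self-equivalence $\Tt[1]$ with $\wild{\Tt[1]}=\CS_1$ and $T_1(\CS_1)=\CS_1$. The complementary set $\CS':=\CS\setminus\CS_1$ inherits all four hypotheses of the proposition with parameters $(m-1,n-1)$: independence of the remaining $\gp$-classes, $2$-divisibility of the remaining $\gq$-classes, the local square condition on $-1$, and the pairwise $\smile$-relatedness all survive restriction to subfamilies. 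Because $m-1\le n-1$, the inductive hypothesis provides a self-equivalence $\Tt[0]$ with $\wild{\Tt[0]}=\CS'$.

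To glue these I would set $\Tt[2]:=\Tt[1]\circ\Tt[0]\circ\Tt[1]^{-1}$; a direct Hilbert-symbol calculation confirms that this is again a self-equivalence, and its wild set is the $T_1$-image $T_1(\CS')$, disjoint from~$\CS_1$ because $T_1$ restricts to a bijection of $X\setminus\CS_1$. Proposition~\ref{prop:union} applied to $\Tt[1]$ and $\Tt[2]$ then produces a single self-equivalence whose wild set equals
\[
\CS_1\cup T_1^{-1}\bigl(T_1(\CS')\bigr)=\CS_1\cup\CS'=\CS,
\]
completing the induction.

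The hardest step will be the wild-set computation $\wild{\Tt[1]\circ\Tt[0]\circ\Tt[1]^{-1}}=T_1(\wild{\Tt[0]})$: naive parity tracking breaks at any point $\gp\in\CS'$ for which $T_0(\gp)\in\CS_1$, since there the parity contributions of $\Tt[1]$ and $\Tt[1]^{-1}$ no longer cancel. My fallback plan would be to apply the inductive hypothesis directly to the translated set $T_1(\CS')$ and verify its four hypotheses by hand: linear independence of $\{\class{T_1\gp_i}\}_{i<m}$ follows from Proposition~\ref{prop:Tt_preserves_2ranks} applied to the open set $Y=X\setminus(\CS_1\cup\{\gp_1,\dotsc,\gp_{m-1}\})$, on which $\Tt[1]$ is tame; the $-1$-square condition at each $T_1\gp_i$ follows from Observations~\ref{obs:local_squares} and~\ref{obs:-1to-1}; while the $2$-divisibility of each $\class{T_1\gq_j}$ and the preservation of the $\smile$-relation need the most delicate arguments, combining the first part of Proposition~\ref{prop:Tt_preserves_2ranks}, Observation~\ref{obs:local_squares}, and the singleton-wild-set characterisation of $2$-divisibility from \cite{CKR18}.
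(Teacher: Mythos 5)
Your overall architecture (induction on $m$ plus gluing via Proposition~\ref{prop:union}) matches the paper's, but you have reversed the order of the two ingredients, and the reversal is where the argument breaks down. The paper applies the inductive hypothesis \emph{first}, to the original points $\gp_1,\dotsc,\gp_{m-1},\gq_1,\dotsc,\gq_{m-1}$, where all four hypotheses hold by assumption, and only afterwards applies Lemma~\ref{lem:rank1n2} to the translated pair $\{T_1\gp_m,T_1\gq_m\}$; the sole transfer obligation is then the $2$-divisibility of the single class $\class{T_1\gq_m}$. You apply Lemma~\ref{lem:rank1n2} first and then need the inductive hypothesis for the whole translated family $T_1(\CS')$, which forces you to verify all four hypotheses after transport by $t_1$ --- in particular the $2$-divisibility of every $\class{T_1\gq_j}$ and the relations $T_1\gq_i\smile T_1\gq_j$. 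You correctly identify these as the delicate points but do not supply them, and they are precisely the mathematical content of the proposition: this is the only place hypothesis~\eqref{it:pi_squares} enters. The paper's mechanism is an explicit element $\Lambda$, obtained from a $\pi_m\in\sing{X\setminus\{\gq_m\}}\setminus\singX$ of odd valuation at~$\gq_m$ (which hypothesis~\eqref{it:pi_squares} makes a local square at the other~$\gq_j$), corrected by the elements $\lambda_j$ of \cite[Lemma~4.1]{CKR18} so as to become a local square at every wild point of $\Tt[1]$; then $t_1\Lambda$ certifies the $2$-divisibility of $\class{T_1\gq_m}$ via \cite[Proposition~3.2]{CKR18}. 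Without some version of this construction your fallback is incomplete, and your primary route (the conjugation $\Tt[1]\circ\Tt[0]\circ\Tt[1]^{-1}$) fails for exactly the reason you flag: nothing prevents $T_0$ from sending a wild point of $\Tt[0]$ into $\CS_1$, where the outer factor is again wild.

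Two further gaps. First, Lemma~\ref{lem:rank1n2} requires $-1\in\squares{K_{\gq_m}}$, which is not among the hypotheses; the paper derives it in its opening paragraph ($\class{\gq_m}\in2\PicX$ forces $\deg\gq_m$ to be even by Observation~\ref{obs_even_degree_of_even_div}, so $-1$ is a square in the residue field and hence a local square). You use this silently. Second, Proposition~\ref{prop:Tt_preserves_2ranks} only controls the rank of the span of the translated classes, not the linear independence of the sub-family $\class{T_1\gp_1},\dotsc,\class{T_1\gp_{m-1}}$, so that verification also needs an extra step. Restructuring the induction in the paper's order makes all of these obligations either disappear or collapse to the single $\Lambda$-computation.
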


\begin{proof}
By assumption, $-1$ is a local square at every~$\gp_i$ for $i\leq m$. We claim that it is a local square at every~$\gq_i$, $i\leq n$, as well. Indeed, we have $\class{\gq_i}\in 2\PicX$, hence~$\gq_i$ has an even degree by Observation~\ref{obs_even_degree_of_even_div}. Therefore, $-1$ is a square in the residue field $K(\gq_i)$ and consequently $-1\in \squares{K_{\gq_i}}$, as claimed.

In order to prove the proposition we proceed by induction on~$m$. For $m = 1$ the conclusion follows from Theorem~\ref{thm:rank1}. Assume that $m > 1$ and the assertion holds for $m-1$. We must consider two cases. First assume that $n = m$. Denote $\CS_1 := \{\gp_1, \dotsc, \gp_{m-1}, \gq_1, \dotsc, \gq_{m-1}\}$ and $Y_1 := X\setminus \CS_1$. By the inductive hypothesis there is a self-equivalence $\Tt[1]$ of~$K$ such that~$\CS_1$ is its wild set.

By assumption, $\class{\gq_m}$ is $2$-divisible in $\PicX$. We claim that $\class{T_1\gq_m}$ is $2$-divisible, as well. Indeed, \cite[Lemma~4.1]{CKR18} asserts that there are $\lambda_1, \dots, \lambda_m\in \dl{X\setminus \{\gq_1, \dotsc, \gq_m\}}$ such that
\[
\lambda_i\notin \squares{K_{\gp_i}}
\qquad\text{and}\qquad
\lambda_i\in \bigcap_{j\neq i}\squares{K_{\gp_j}}
\]
for every $i\leq m$. By \cite[Proposition~3.2]{CKR18} there is an element $\pi_m\in \sing{X\setminus\{\gq_m\}}\setminus\singX$ such that $\ord_{\gq_m}\pi_m \equiv1 \pmod{2}$. It follows from condition~\eqref{it:pi_squares} that
\[
\pi_m\in \bigcap_{j\leq m-1} \squares{K_{\gq_j}}.
\]
Define an element~$\Lambda$ by a formula
\[
\Lambda := \pi_m\cdot \bigcap_{ \substack{j\leq m\\ \mathclap{\pi_m\notin \squares{K_{\gp_j}}}} } \lambda_j.
\]
It is clear that~$\Lambda$ is a local square at every~$\gp_i$, $i\leq m$ and every~$\gq_j$, $j\leq m-1$. It follows from Observation~\ref{obs:local_squares} that $t_1\Lambda$ is a local square at $T_1\gp_1, \dotsc, T_1\gp_m$ and $T_1\gq_1, \dotsc, T_1\gq_{m-1}$. In particular it has even valuations at all those points. Moreover, for any point $\gr\in Y_1$, $\gr\neq \gq_m$ the valuation of~$\Lambda$ at~$\gr$ is even and so the valuation of $t_1\Lambda$ is even at $T_1\gr$, since $\Tt[1]$ is tame everywhere on~$Y_1$. On the other hand, $\Lambda$ has an odd valuation at~$\gq_m$ and so $t_1\Lambda$ has an odd valuation at $T_1\gq_m$. Summarizing, we have shown
\[
t_1\Lambda\in \Sing{X\setminus\{T_1\gq_m\}}
\qquad\text{and}\qquad
\ord_{T_1\gq_m}t_1\Lambda\equiv 1\pmod{2}.
\]
Thus \cite[Proposition~3.2]{CKR18} asserts that $\class{T_1\gq_m}$ is $2$-divisible in $\PicX$, as claimed.

From the above discussion we infer that $\rk G_{X\setminus \{T_1\gp_m T_1\gq_m\}}\leq 1$. (In fact it can be shown that this $2$-rank is precisely~$1$.) Hence, by Lemma~\ref{lem:rank1n2}, there is a self-equivalence $\Tt[2]$ with a wild set $\wild{\Tt[2]} = \{T_1\gp_m, T_1\gq_m\}$ and such that $T_2$ maps the set $\{T_1\gp_m, T_1\gq_m\}$ to itself. Proposition~\ref{prop:union} asserts that the set $\{\gp_1, \dotsc, \gp_m, \gq_1, \dotsc, \gq_m\}$ is the wild set of the composition $(T_2\circ T_1, t_2\circ t_1)$. 

Finally, we consider the case when $n > m$. By the previous part, there is a self-equivalence $\Tt[1]$ such that $\CS_1 := \{\gp_1, \dotsc, \gp_m, \gq_1, \dotsc, \gq_m\}$ is the wild set of $\Tt[1]$. The classes of $\gq_{m+1}, \dotsc, \gq_n$ are $2$-divisible in $\PicX$ and so are the classes of their images $T_1\gq_{m+1}, \dotsc, T_1\gq_n$ by the same argument as above. Thus, Theorem~\ref{thm:rank0} asserts that there is a self-equivalence $\Tt[2]$ with a wild set $\CS_2 := \{T_1\gq_{m+1}, \dotsc, T_1\gq_n\}$. Using Proposition~\ref{prop:union} again, we show that $\CS_1\cup T_1^{-1}\CS_2$ is the wild set of $(T_2\circ T_1, t_2\circ t_1)$.
\end{proof}


\end{document}